\newtheorem{theorem}{Theorem}[section]
\newtheorem{thm}[theorem]{Theorem}
\newtheorem{coro}[theorem]{Corollary}
\theoremstyle{plain}
\newtheorem*{namedthm}{\namedthmname}
\newcounter{namedthm}
\newcommand{\R}{\mathbb{R}}
\newcommand{\C}{\mathbb{C}}
\newcommand{\N}{\mathbb{N}}
\numberwithin{equation}{section}
\begin{document}
	\def\K{\mathbb{K}}
	\def\R{\mathbb{R}}
	\def\C{\mathbb{C}}
	\def\Z{\mathbb{Z}}
	\def\Q{\mathbb{Q}}
	\def\D{\mathbb{D}}
	\def\N{\mathbb{N}}
	\def\T{\mathbb{T}}
	\def\P{\mathbb{P}}
	\def\A{\mathscr{A}}
	\def\CC{\mathscr{C}}
	%%%%%%%%%%%%%%%%%%%%%%%%%%%%%%%%%%%%%%%%%%%%%%%%%
	\renewcommand{\theequation}{\thesection.\arabic{equation}}
	\renewenvironment{proof}{{\bfseries Proof:}}{\qed}
	\renewcommand{\thelemme}{\empty{}}
	\newtheorem{cond}{C}
	\newtheorem{lemma}{Lemma}[section]
	\newtheorem{corollary}{Corollary}[section]
	\newtheorem{proposition}{Proposition}[section]
	\newtheorem{notation}{Notation}[section]
	\newtheorem{remark}{Remark}[section]
	\newtheorem{example}{Example}[section]
	\newtheorem{probleme}{Problem}[section]
	\bibliographystyle{plain}

\title[Study of the operator $\partial^k \bar{\partial}^{k} + c$~~]{\textbf {Study of the operator} $\partial^{k} \bar{\partial}^{k} + c$ \textbf {in the weighted Hilbert space} $L^2(\mathbb{\C}, \mathrm{\textnormal{e}}^{\mathrm{-\vert z \vert^2}})$.}
\author[ E.\  Bodian    \& S.\  Sambou \& P.\  Badiane \&  W.\ O.\  Ingoba  \& S.\  Sambou ]{ Eramane Bodian  \& Souhaibou Sambou \& Papa Badiane \& Winnie Ossete Ingoba \& Salomon Sambou }
\address{ Mathematics Department\\UFR of Science and Technology \\  Assane Seck University of Ziguinchor, BP: 523 (Senegal)}
\email{m.bodian@univ-zig.sn}
\address{Mathematics Department\\UFR of Science and Technology \\  Assane Seck University of Ziguinchor, BP: 523 (Senegal)}
\email{s.sambou1440@zig.univ.sn }
\address{Mathematics Department\\UFR of Science and Technology \\  Assane Seck University of Ziguinchor, BP: 523 (Senegal)}
\email{p.badiane4963@zig.univ.sn}

\address{Mathematics Department\\ Faculty of Science and Technology \\  Marien Ngouabi University, (Congo)}
\email{wnnossete@gmail.com}

\address{Mathematics Department\\UFR of Science and Technology \\  Assane Seck University of Ziguinchor, BP: 523 (Senegal)}
\email{ssambou@univ-zig.sn }
%\date{\today}
%\begin{a
%\date{\today}
%\begin{abstract}
%\subjclass[2010]{14G22, 14G40, 14TXX, 32U25, 32U35, 32U40}
%\keywords{Espaces de Berkovich, cohomologie de Dolbeault, cohomologie de de Rham, super-courants, superformes, op\'erateurs $d,\,d'\,d''$.}
 
%\end{abstract}

%\maketitle
\subjclass{}

\maketitle
%%%%%%%%%%%%%%%%%%%%%%%%%%%%%%%%%%%%%%%%%%%%%%%%%%%%%%%%%%%%%%%%%%%%%%%%%%%%%%%%%%%%%%%%%%%%%%%%%%%%%%%
\renewcommand{\abstractname}{Abstract}
\begin{abstract}
By the H\"ormander's $L^2$-method, we study the operator $\partial^k \bar{\partial}^{k} + c$ for any order $k$ in the weighted Hilbert space $L^2(\mathbb{\C}, \mathrm{\textnormal{e}}^{-|z|^2})$. We prove the existence of its right inverse witch is also a bounded operator.  

%%%%%%%%%%%%%%%%%%%%%%%%%%%%%%%%%%%%%%%%%%%%%%%%%%%%%%%%%%%%%%%%%%%%%%%%%%%%%%%%%%%%%%%%%%%%%%%%%%%%%%%%%%
\vskip 2mm
\noindent
\keywords{{\bf Keywords :}  The operator $\partial^k \bar{\partial}^{k} + c$, Weighted Hilbert space $L^2(\mathbb{\C}, \mathrm{\textnormal{e}}^{-|z|^2})$, H\"ormander's $L^2$-method.}
\vskip 1.3mm
\noindent
%\textit{
{\bf Mathematics Subject Classification (2010)} . 32F32.
\end{abstract}  
\renewcommand{\abstractname}{Abstract}
%%%%%%%%%%%%%%%%%%%%%%%%%%%%%%%%%%%%%%%%%%%%%%%%%%%%%%%%%%%%%%%%%%%%%%%%%%%%%%%%%%%%%%%%%%%%%%%%%%%%%%%%%%   
\section{Introduction}
In \cite{1}, Shoayu DAI and Yifei PAN are interested to the operator's study $\frac{d^k}{dx^k} + a$. This work has been extended in \cite{2} to the operator $\bar{\partial}^k + a$. Inspired by the $\partial \bar{\partial}$ problem, we study the operator $\partial^k \bar{\partial}^k + c$ in the weighted Hilbert space $L^2(\mathbb{\C}, \mathrm{\textnormal{e}}^{-|z|^2})$.\\
We prove the existence of a weak solution of the equation $$\partial^k \bar{\partial}^k u + cu = f$$ where $u, f \in L^2(\mathbb{\C}, \mathrm{\textnormal{e}}^{- \varphi})$, $c$ is a complex constant, $\bar{\partial}^{k} = \frac{\partial^k}{\partial \bar{z}^k}$ and $\partial^{k} = \frac{\partial^k}{\partial z^k}$. We thus have the following result:
\begin{thm} \label{P}
For any $f \in L^2(\mathbb{\C}, \mathrm{\textnormal{e}}^{- \vert z \vert^2})$, there exists a weak solution $u \in L^2(\mathbb{\C}, \mathrm{\textnormal{e}}^{- \vert z \vert^2})$ of the equation
$$ \partial^k \bar{\partial}^{k} u + cu = f$$ with the norm estimate  
$$\int_\mathbb{C} \vert u \vert^2 \mathrm{\textnormal{e}}^{- \vert z \vert^2} d\sigma \leq \frac{1}{(k!)^2} \int_\mathbb{C} \vert f \vert^2 \mathrm{\textnormal{e}}^{- \vert z \vert^2} d\sigma.$$
Moreover for $k = 1$, we have for any smooth and nonnegative function $\varphi$ define on $\mathbb{C}$ such that $\Delta(\mathrm{\textnormal{e}}^{\varphi} \Delta \mathrm{\textnormal{e}}^{- \varphi })> 0$. For all $ f \in L^2(\mathbb{\C}, \mathrm{\textnormal{e}}^{- \varphi})$ such that $$\frac{f}{\sqrt{\Delta(\mathrm{\textnormal{e}}^{\varphi} \Delta \mathrm{\textnormal{e}}^{-\varphi})}}  \in L^2(\mathbb{\C}, \mathrm{\textnormal{e}}^{-\varphi}),$$ there exists a weak solution $ u \in  L^2(\mathbb{\C}, \mathrm{\textnormal{e}}^{-\varphi})$ of the equation 
$$ \partial^1 \bar{\partial}^1 u + cu = f$$ with the norm estimate   
$$\int_\mathbb{C} \vert u \vert^2 \mathrm{\textnormal{e}}^{- \varphi} d\sigma \leq 16 \int_\mathbb{C} \frac{\vert f \vert^2}{\Delta(\mathrm{\textnormal{e}}^{ \varphi} \Delta \mathrm{\textnormal{e}}^{- \varphi})}  \mathrm{\textnormal{e}}^{-\varphi} d\sigma.$$
\end{thm}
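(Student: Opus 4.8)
The plan is to follow Hörmander's $L^2$-scheme, converting the existence-with-estimate statement into an a priori coercivity estimate for the Hilbert-space adjoint of $T:=\partial^k\bar\partial^k+c$ and then invoking duality. Work in $H:=L^2(\mathbb{C},e^{-\varphi}d\sigma)$ with $\varphi=|z|^2$ for the first part, regarding $T$ as a densely defined closed operator (with the polynomials, or $C^\infty_c$, as a core). Integration by parts against the weight gives the formal adjoints $\partial^*=-\partial_{\bar z}+\partial_{\bar z}\varphi$ and $\bar\partial^*=-\partial_z+\partial_z\varphi$, hence $T^*=(\bar\partial^*)^k(\partial^*)^k+\bar c$. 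Once the coercivity estimate $\|T^*v\|\ge k!\,\|v\|$ is available for all $v$ in the domain of $T^*$, the functional $T^*v\mapsto\langle v,f\rangle$ is well defined and bounded on the range of $T^*$ with norm $\le \tfrac{1}{k!}\|f\|$; the Riesz representation theorem (equivalently Hahn--Banach) then produces $u\in H$ with $\langle u,T^*v\rangle=\langle v,f\rangle$ for all $v$, i.e. a weak solution of $Tu=f$ satisfying $\|u\|\le\tfrac1{k!}\|f\|$, which is exactly the claimed bound.

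The heart of the first part is therefore the estimate $\|T^*v\|\ge k!\|v\|$. For $\varphi=|z|^2$ one has $\partial_z\varphi=\bar z$ and $\partial_{\bar z}\varphi=z$, so that $A:=\partial_z$ and $B:=\partial_{\bar z}$, together with $A^*=-\partial_{\bar z}+z$ and $B^*=-\partial_z+\bar z$, form two commuting bosonic modes: $[A,A^*]=[B,B^*]=1$ while all other commutators among $A,B,A^*,B^*$ vanish. I would diagonalize $N:=A^*A$ and $M:=B^*B$ simultaneously (the two-dimensional oscillator/Hermite basis $e_{n,m}$, whose completeness in $H$ must be checked) and use the normal-ordering identities $A^k(A^*)^k=(N+1)\cdots(N+k)$ and $B^k(B^*)^k=(M+1)\cdots(M+k)$. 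Expanding
$$\|T^*v\|^2=\big\|(\bar\partial^*)^k(\partial^*)^kv\big\|^2+|c|^2\|v\|^2+2\,\mathrm{Re}\,\big(c\,\langle (\bar\partial^*)^k(\partial^*)^kv,\,v\rangle\big)$$
and writing $\|(\bar\partial^*)^k(\partial^*)^kv\|^2=\langle v,\,A^k(A^*)^kB^k(B^*)^kv\rangle\ge(k!)^2\|v\|^2$ produces the principal term. The main obstacle is the indefinite cross term carrying $c$: since $(\bar\partial^*)^k(\partial^*)^k$ acts as a weighted shift $e_{n,m}\mapsto\lambda_{n,m}e_{n+k,m+k}$ with $\lambda_{n,m}=\sqrt{(n+k)!(m+k)!/(n!m!)}\ge k!$, I would decompose $H$ into shift-orbits, reduce to a unilateral weighted shift plus a multiple of the identity on each orbit, and exploit that the surplus $\lambda_{n,m}^2-(k!)^2$ grows exactly where the shift coupling is strong, so that completing the square (already visible on two-term test vectors) absorbs the cross term and yields $\|T^*v\|^2\ge(k!)^2\|v\|^2$ uniformly in $c$.

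For the case $k=1$ with a general weight $\varphi$ I expect the estimate to take the integrated curvature form
$$\int_{\mathbb{C}}|v|^2\,\Delta\!\big(e^{\varphi}\Delta e^{-\varphi}\big)\,e^{-\varphi}\,d\sigma\ \le\ 16\int_{\mathbb{C}}\big|\bar\partial^*\partial^*v+\bar c\,v\big|^2 e^{-\varphi}\,d\sigma=16\,\|T^*v\|^2,$$
with $T^*=\bar\partial^*\partial^*+\bar c$. I would obtain it by repeated integration by parts against $e^{-\varphi}$, organizing the first- and second-order commutators so that the zeroth-order remainder is precisely the positive weight $\Delta(e^\varphi\Delta e^{-\varphi})=\Delta(|\nabla\varphi|^2-\Delta\varphi)$ (the hypothesis $\Delta(e^\varphi\Delta e^{-\varphi})>0$ being what makes the estimate coercive), the factor $16$ arising from $\partial\bar\partial=\tfrac14\Delta$ applied twice. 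As a consistency check, for $\varphi=|z|^2$ one computes $e^\varphi\Delta e^{-\varphi}=4(|z|^2-1)$ and $\Delta(e^\varphi\Delta e^{-\varphi})=16$, so the displayed estimate reduces to $\|v\|^2\le\|T^*v\|^2$, recovering the $k=1$ instance of the first part. With the curvature estimate in hand, Cauchy--Schwarz gives
$$|\langle f,v\rangle|\le\Big(\int_{\mathbb{C}}\tfrac{|f|^2}{\Delta(e^\varphi\Delta e^{-\varphi})}e^{-\varphi}d\sigma\Big)^{1/2}\Big(\int_{\mathbb{C}}|v|^2\Delta(e^\varphi\Delta e^{-\varphi})e^{-\varphi}d\sigma\Big)^{1/2}\le 4\Big(\int_{\mathbb{C}}\tfrac{|f|^2}{\Delta(e^\varphi\Delta e^{-\varphi})}e^{-\varphi}d\sigma\Big)^{1/2}\|T^*v\|,$$
and the same duality step produces $u$ with the stated weighted bound $\int|u|^2e^{-\varphi}d\sigma\le 16\int|f|^2/\Delta(e^\varphi\Delta e^{-\varphi})\,e^{-\varphi}d\sigma$. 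The genuinely delicate points common to both parts are the functional-analytic bookkeeping (density of the chosen core, identification of $\mathrm{Dom}(T^*)$, completeness of the oscillator basis) and, above all, the coercive lower bound on $\|T^*v\|$ in the presence of the indefinite constant $c$; the integration-by-parts extraction of the exact weight $\Delta(e^\varphi\Delta e^{-\varphi})$ for $k=1$ is the other computation requiring care.
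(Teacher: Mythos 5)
Your proposal is correct in its overall architecture and arrives at the same estimates as the paper, but the route to the crucial coercivity bound is genuinely different. Both arguments run the same H\"ormander duality scheme: existence with $\|u\|_\varphi^2\le a$ is reduced to the a priori inequality $|\langle f,\phi\rangle_\varphi|^2\le a\,\|(\partial^k\bar\partial^k+c)^*_\varphi\phi\|_\varphi^2$ for test functions $\phi$, and one concludes by Hahn--Banach and Riesz representation (the paper's first lemma). Where you diverge is in proving $\|T^*\phi\|_\varphi^2\ge (k!)^2\|\phi\|_\varphi^2$. The paper first establishes the exact identity
$$\|T^*_\varphi\phi\|_\varphi^2=\|T\phi\|_\varphi^2+\langle\phi,\,[\partial^k\bar\partial^k,\ \bar\partial_\varphi^{k*}\partial_\varphi^{k*}]\phi\rangle_\varphi,$$
in which the constant $c$ cancels identically, and then computes the commutator for $\varphi=|z|^2$ by Leibniz and Fa\`a di Bruno, integrating by parts until it becomes a nonnegative combination of the $\|\partial^\alpha\bar\partial^\beta\phi\|_\varphi^2$ with $0\le\alpha,\beta\le k-1$, whose $\alpha=\beta=0$ term is exactly $(k!)^2\|\phi\|_\varphi^2$. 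You instead expand $\|T^*v\|^2$ directly and propose to tame the indefinite cross term in $c$ by diagonalizing the two commuting oscillators in the complex Hermite basis and treating $T^*$ as a weighted shift plus a constant on each orbit. That plan does succeed: on an orbit with shift weights $\mu_j$ one has $\|(W+\bar c)x\|^2=\|(W^*+c)x\|^2+\sum_j(\mu_j^2-\mu_{j-1}^2)|x_j|^2$, and the diagonal surplus is $\ge(k!)^2$. But note that this ``completing the square'' \emph{is} the commutator identity in spectral form; stating it once, as the paper does, eliminates $c$ in one line and spares you both the orbit bookkeeping and the completeness argument for the complex Hermite basis, which you rightly flag as an obligation of your approach. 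What your spectral picture buys in exchange is transparency: the commutator eigenvalues $\tfrac{(n+k)!(m+k)!}{n!\,m!}-\tfrac{n!\,m!}{(n-k)!\,(m-k)!}$ make the lower bound $(k!)^2$ and its sharpness at $n=m=0$ visible at a glance, which the paper's multi-index computation obscures. Your $k=1$ argument --- extracting the weight $\partial\bar\partial(e^{\varphi}\partial\bar\partial e^{-\varphi})=\tfrac1{16}\Delta(e^{\varphi}\Delta e^{-\varphi})$ from the zeroth-order part of the commutator and splitting it in Cauchy--Schwarz --- is the paper's proof essentially verbatim, including the origin of the factor $16$, and your consistency check at $\varphi=|z|^2$ is sound. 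Two points remain to be written out in full: the absorption of the cross term (as sketched it is a plan, not a proof, and the commutator identity is the shortest way to discharge it), and, in the $k=1$ case with general $\varphi$, the fact that the first- and second-order terms of $[\partial\bar\partial,\bar\partial^*_\varphi\partial^*_\varphi]$ contribute nonnegatively rather than vanishing --- already for $\varphi=|z|^2$ the commutator is $\|\partial\phi\|_\varphi^2+\|\bar\partial\phi\|_\varphi^2+\|\phi\|_\varphi^2$ rather than $\|\phi\|_\varphi^2$ alone, so only an inequality, not an identity, relates $\|T^*\phi\|_\varphi^2$ to the curvature weight; this caveat applies equally to the paper's own presentation.
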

As a consequence of theorem \ref{P}, we show that the operator $\partial^k \bar{\partial}^k + c$ has a right inverse and that its inverse is bounded in  $L^2(\mathbb{\C}, \mathrm{\textnormal{e}}^{- \vert z \vert^2})$.\\
For $k = 1$ then $4  \partial^1 \bar{\partial}^1 = \Delta $ is the complex Laplacian and $\Delta$ is the Laplacian in $\mathbb{R}^2$.
%On a aussi étudié l'opérateur $\partial^k \bar{\partial}^k + c$ au premier ordre ($k= 1$) on obtient un opérateur $$\partial^1 \bar{\partial}^1 + c.$$
%Avec $\Delta = 4  \partial^1 \bar{\partial}^1$, on obtient le résultat suivant:
%\begin{thm}\label{t1}
%Soit $\varphi$ une fonction lisse et strictement positive sur $\mathbb{C}$ avec $\Delta(\mathrm{\textnormal{e}}^{\varphi} \Delta \mathrm{\textnormal{e}}^{- \varphi })> 0$. Pour tout $ f \in L^2(\mathbb{\C}, \mathrm{\textnormal{e}}^{- \varphi})$ telle que $$\frac{f}{\sqrt{\Delta(\mathrm{\textnormal{e}}^{\varphi} \Delta \mathrm{\textnormal{e}}^{-\varphi})}}  \in L^2(\mathbb{\C}, \mathrm{\textnormal{e}}^{-\varphi}),$$ il existe une solution faible $ u \in  L^2(\mathbb{\C}, \mathrm{\textnormal{e}}^{-\varphi})$ de l'équation
%$$ \partial^1 \bar{\partial}^1 u + cu = f$$ avec l'estimation de la norme
%$$\int_\mathbb{C} \vert u \vert^2 \mathrm{\textnormal{e}}^{- \varphi} d\sigma \leq 16 \int_\mathbb{C} \frac{\vert f \vert^2}{\Delta(\mathrm{\textnormal{e}}^{ \varphi} \Delta \mathrm{\textnormal{e}}^{- \varphi})}  \mathrm{\textnormal{e}}^{-\varphi} d\sigma.$$
%\end{thm}
\section{Some Lemmas}
Consider the weighted Hilbert space 

$$ L^2(\mathbb{\C}, \mathrm{\textnormal{e}}^{- \varphi}) = \{ f \in L_{loc}^2(\mathbb{\C}) : \int_{\mathbb{C}} \vert f \vert^2 \mathrm{\textnormal{e}}^{- \varphi } d\sigma < + \infty \}$$
where $\varphi$ is a nonnegative function on $\mathbb{C}$.\\
We define the weighted inner product on $L^2(\mathbb{\C}, \mathrm{\textnormal{e}}^{- \varphi})$ by: $\forall$ $f, g \in L^2(\mathbb{\C}, \mathrm{\textnormal{e}}^{- \varphi})$, we have
$$<f, g>_\varphi = \int_\mathbb{C} \overline{f}g \mathrm{\textnormal{e}}^{- \varphi} d\sigma$$
and the norm on $L^2(\mathbb{\C}, \mathrm{\textnormal{e}}^{- \varphi})$ by
$$\parallel f \parallel_\varphi = \sqrt{<f, f>_\varphi} \; \; \; \; \; \; \; \; \forall \; \; \; \; f \in L^2(\mathbb{\C}, \mathrm{\textnormal{e}}^{- \varphi}).$$
We denote by $C_0^\infty(\mathbb{C})$ the space of the functions $\phi : \mathbb{C} \rightarrow \mathbb{C}$ of class $C^\infty$ with compact support.\\
Let $u,  f \in L_{loc}^2(\mathbb{\C})$, we say that $u$ is a weak solution of the equation $\partial^k \bar{\partial}^{k} u = f$ if
$$ \int_\mathbb{C} u \partial^k \bar{\partial}^{k} \phi d\sigma = \int_\mathbb{C} f \phi d\sigma .$$
Let $\varphi$ be a nonnegative function of class $C^\infty$ on $\mathbb{C}$ and $\phi \in C_0^\infty(\mathbb{C})$, we define the formal adjoint of $\partial^k \bar{\partial}^{k}$ with respect to the definition of weighted inner product in  $L^2(\mathbb{\C}, \mathrm{\textnormal{e}}^{- \varphi})$ and with respect to the definition of the weak solution as :
\begin{eqnarray*}
<\phi, \partial^k \bar{\partial}^{k} u> & = & \int_{\mathbb{C}} \overline{\phi} \partial^k \bar{\partial}^{k} u \mathrm{\textnormal{e}}^{- \varphi} d\sigma\\
& = &  \int_{\mathbb{C}}  \bar{\partial}^{k} \partial^k( \overline{\phi}\mathrm{\textnormal{e}}^{- \varphi})  u  d\sigma\\
& = &  \int_{\mathbb{C}} \mathrm{\textnormal{e}}^{ \varphi} \bar{\partial}^{k} \partial^k ( \overline{\phi}\mathrm{\textnormal{e}}^{- \varphi})  u \mathrm{\textnormal{e}}^{- \varphi} d\sigma\\
& = & \int_{\mathbb{C}} \overline{\mathrm{\textnormal{e}}^{ \varphi} \partial^{k} \bar{\partial}^k ( \phi \mathrm{\textnormal{e}}^{- \varphi})} u \mathrm{\textnormal{e}}^{- \varphi} d\sigma\\
& = & <\mathrm{\textnormal{e}}^{ \varphi} \partial^{k} \bar{\partial}^k ( \phi \mathrm{\textnormal{e}}^{- \varphi}), u >_\varphi\\
& =: & < \bar{\partial}_\varphi^{k*} \partial_\varphi^{k*}( \phi), u >_\varphi
\end{eqnarray*}
where $ \bar{\partial}_\varphi^{k*} \partial_\varphi^{k*}( \phi) = \mathrm{\textnormal{e}}^{ \varphi} \partial^{k} \bar{\partial}^k ( \phi \mathrm{\textnormal{e}}^{- \varphi})$ is the formal adjoint of  $\partial^k \bar{\partial}^{k}$ in $C_0^\infty(\mathbb{C})$.\\
Let $(\partial^k \bar{\partial}^{k} + c)_\varphi^*$ be the formal adjoint of $\partial^k \bar{\partial}^{k} + c$ in $C_0^\infty(\mathbb{C})$.\\
Let $I_\varphi^* = I$ where $I$ the identity operator, then $$ (\partial^k \bar{\partial}^{k} + c)_\varphi^* = \bar{\partial}_\varphi^{k*} \partial_\varphi^{k*} + c.$$
We start by giving some lemmas in the general case of
weighted spaces based on the functional analysis which will be very useful to us in the proof of the Theorem \ref{P}.
\begin{lemma} \label{A}
For each $f \in L^2(\mathbb{\C}, \mathrm{\textnormal{e}}^{- \varphi})$, there exists a weak solution $u \in L^2(\mathbb{\C}, \mathrm{\textnormal{e}}^{- \varphi})$ of the equation
$$ \partial^k \bar{\partial}^{k} u + cu = f$$ with the norm estimate  
$$\vert \vert u \vert \vert_\varphi^2 \leq a$$ if and only if
$$\vert <f, \phi>_\varphi \vert^2 \leq a \vert \vert (\partial^k \bar{\partial}^{k} + c)_\varphi^* \phi \vert \vert_\varphi ^2$$
$\forall$ $\phi \in C_0^\infty(\mathbb{C})$ where $a$ is a constant.
\end{lemma}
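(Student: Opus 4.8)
The plan is to recognize this as the standard duality lemma underlying H\"ormander's $L^2$-method and to prove the two implications separately. Throughout, write $P := \partial^k \bar{\partial}^{k} + c$ and let $P^* := (\partial^k \bar{\partial}^{k} + c)_\varphi^*$ denote the formal adjoint computed above. Recall that by construction $<\phi, Pu>_\varphi = <P^*\phi, u>_\varphi$ for all $\phi \in C_0^\infty(\mathbb{C})$, so that (conjugating) $<u, P^*\phi>_\varphi = <Pu, \phi>_\varphi$, and that $u$ is a weak solution of $Pu = f$ precisely when
$$<u, P^*\phi>_\varphi = <f, \phi>_\varphi \qquad \text{for every } \phi \in C_0^\infty(\mathbb{C}).$$

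For the forward implication, suppose such a weak solution $u$ exists with $\|u\|_\varphi^2 \leq a$. Then for any $\phi \in C_0^\infty(\mathbb{C})$ the weak-solution identity together with the Cauchy--Schwarz inequality gives
$$|<f, \phi>_\varphi| = |<u, P^*\phi>_\varphi| \leq \|u\|_\varphi \, \|P^*\phi\|_\varphi \leq \sqrt{a}\,\|P^*\phi\|_\varphi,$$
and squaring yields the desired estimate. This direction is entirely routine.

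The substance is the converse, where one must \emph{produce} a solution from the estimate; this is the main obstacle and is handled by a Hahn--Banach/Riesz argument. Consider the linear subspace $R(P^*) := \{P^*\phi : \phi \in C_0^\infty(\mathbb{C})\} \subset L^2(\mathbb{C}, e^{-\varphi})$ and define a functional $L$ on it by $L(P^*\phi) := <f, \phi>_\varphi$. The first point is that $L$ is well defined: if $P^*\phi_1 = P^*\phi_2$, then the hypothesis applied to $\phi_1-\phi_2$ forces $|<f, \phi_1-\phi_2>_\varphi|^2 \leq a\,\|P^*(\phi_1-\phi_2)\|_\varphi^2 = 0$, so the assigned value does not depend on the chosen preimage. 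The same estimate shows that $L$ is bounded on $R(P^*)$ with $|L(P^*\phi)| \leq \sqrt{a}\,\|P^*\phi\|_\varphi$, i.e. $\|L\| \leq \sqrt{a}$.

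By the Hahn--Banach theorem I extend $L$ to a bounded linear functional $\tilde{L}$ on all of $L^2(\mathbb{C}, e^{-\varphi})$ without increasing its norm, so $\|\tilde{L}\| \leq \sqrt{a}$. The Riesz representation theorem then furnishes a unique $u \in L^2(\mathbb{C}, e^{-\varphi})$ with $\tilde{L}(v) = <u, v>_\varphi$ for all $v$ and $\|u\|_\varphi = \|\tilde{L}\| \leq \sqrt{a}$, hence $\|u\|_\varphi^2 \leq a$. Restricting to $v = P^*\phi$ gives
$$<u, P^*\phi>_\varphi = \tilde{L}(P^*\phi) = L(P^*\phi) = <f, \phi>_\varphi$$
for every $\phi \in C_0^\infty(\mathbb{C})$, which is exactly the statement that $u$ is a weak solution of $Pu = f$. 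The only care needed is bookkeeping of the conjugate-linearity convention in $<\cdot,\cdot>_\varphi$, so that $L$ is genuinely linear (both $\phi \mapsto P^*\phi$ and $\phi \mapsto <f,\phi>_\varphi$ are linear) and the Riesz representative lands on the correct side of the inner product.
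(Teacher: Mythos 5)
Your proof is correct and follows essentially the same route as the paper: Cauchy--Schwarz for the forward implication, then Hahn--Banach extension of the functional $L$ on the range of the adjoint followed by Riesz representation for the converse. Your explicit check that $L$ is well defined on $\{P^*\phi\}$ (via the hypothesis applied to $\phi_1-\phi_2$) is a small point the paper's argument skips over, and is a worthwhile addition.
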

\begin{proof}
Let $H = \partial^k \bar{\partial}^{k} + c$ and $H_\varphi^* = (\partial^k \bar{\partial}^{k} + c)_\varphi^*$.\\
We suppose that there exists $u \in L^2(\mathbb{\C}, \mathrm{\textnormal{e}}^{- \varphi})$ such that $$ \partial^k \bar{\partial}^{k} u + cu = f$$ with the norm estimate  
$$\vert \vert u \vert \vert_\varphi^2 \leq a,$$ 
the we have
\begin{eqnarray*}
\vert <f, \phi>_\varphi \vert^2 = \vert <Hu, \phi>_\varphi \vert^2 & = & \vert <u, H_\varphi^* \phi>_\varphi \vert^2\\
& \leq & \vert \vert u \vert \vert_\varphi^2 \vert \vert H_\varphi^* \phi \vert \vert_\varphi^2 \\
& \leq & a \vert \vert H_\varphi^* \phi \vert \vert_\varphi^2 \\
\end{eqnarray*}
therefore $$\vert <f, \phi>_\varphi \vert^2 \leq a \vert \vert (\partial^k \bar{\partial}^{k} + c)_\varphi^* \phi \vert \vert_\varphi ^2.$$
We suppose that $$\vert <f, \phi>_\varphi \vert^2 \leq a \vert \vert (\partial^k \bar{\partial}^{k} + c)_\varphi^* \phi \vert \vert_\varphi ^2.$$
Consider the subspace
$$ E = \{ H_\varphi^* \phi : \phi \in C_0^\infty(\mathbb{C}) \} \subset L^2(\mathbb{\C}, \mathrm{\textnormal{e}}^{- \varphi}).$$
We define a linear map by
$$ L_f : E \rightarrow \mathbb{C}$$ with $$L_f(H_\varphi^* \phi) = <f, \phi>_\varphi = \int_\mathbb{C} \overline{f} \phi \mathrm{\textnormal{e}}^{- \varphi} d \sigma$$
however
$$\vert L_f(H_\varphi^* \phi) \vert = \vert <f, \phi>_\varphi \vert \leq \sqrt{a} \vert \vert H_\varphi^* \phi \vert \vert_\varphi$$
therefore $L_f$ is bounded in $E$.\\
According to the Hahn-Banach extension theorem, $L_f$ can be extended to an operator $\tilde{L}_f$ on $L^2(\mathbb{\C}, \mathrm{\textnormal{e}}^{- \varphi})$ with
$$\vert \tilde{L}_f(g)\vert \leq \sqrt{a} \vert \vert g \vert \vert_\varphi \; \; \; \forall \; \; g \in L^2(\mathbb{\C}, \mathrm{\textnormal{e}}^{- \varphi}).$$
According to Riesz's representation theorem, there exists a unique $u_0 \in L^2(\mathbb{\C}, \mathrm{\textnormal{e}}^{- \varphi})$  such that
$$\tilde{L}_f(g) = <u_0, g>_\varphi \; \; \; \forall \; \; g \in L^2(\mathbb{\C}, \mathrm{\textnormal{e}}^{- \varphi})$$
therefore
$$\tilde{L}_f(H_\varphi^* \phi) = <u_0, H_\varphi^* \phi>_\varphi = <Hu_0,  \phi>_\varphi$$
however 
$$\tilde{L}_f(H_\varphi^* \phi) = L_f(H_\varphi^* \phi) =  <f,  \phi>_\varphi$$ in $E$ therefore
$$<Hu_0,  \phi>_\varphi = <f,  \phi>_\varphi$$
so $$ Hu_0 = f$$
which implies
$$\partial^k \bar{\partial}^{k} u_0 + cu_0 = f$$
and
$$\vert \vert u_0 \vert \vert_\varphi ^2 = \vert  <u_0 ,  u_0 >_\varphi \vert = \vert \tilde{L}_f(u_0)\vert \leq \sqrt{a} \vert \vert u_0 \vert \vert_\varphi$$
therefore
$$ \vert \vert u_0 \vert \vert_\varphi ^2 \leq a$$
however $u_0 \in  L^2(\mathbb{\C}, \mathrm{\textnormal{e}}^{- \varphi})$
then let $u_0 = u$, so there exists a $u \in L^2(\mathbb{\C}, \mathrm{\textnormal{e}}^{- \varphi})$ such that
$$\partial^k \bar{\partial}^{k} u + cu = f$$
with
$$\vert \vert u \vert \vert_\varphi ^2 \leq a.$$
\end{proof}
\begin{lemma} \label{B}
$$\vert \vert (\partial^k \bar{\partial}^{k} + c)_\varphi^* \phi \vert \vert_\varphi^2 = \vert \vert (\partial^k \bar{\partial}^{k} + c) \phi \vert \vert^2_\varphi + <\phi,\partial^k \bar{\partial}^{k}( \bar{\partial}^{k*} \partial^{k*} \phi) - \bar{\partial}^{k*} \partial^{k*}(\partial^k \bar{\partial}^{k} \phi) >_\varphi$$
$\forall$ $\phi \in C_0^\infty(\mathbb{C})$.
\end{lemma}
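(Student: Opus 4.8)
The plan is to expand both squared norms and reduce everything to a single commutator, using the adjoint relation
$$\langle \phi,\, \partial^k\bar\partial^k u\rangle_\varphi = \langle \bar\partial_\varphi^{k*}\partial_\varphi^{k*}\phi,\, u\rangle_\varphi$$
established in the computation that precedes Lemma \ref{A}. To lighten the notation I would abbreviate $A := \partial^k\bar\partial^k$ and $A^* := \bar\partial_\varphi^{k*}\partial_\varphi^{k*}$, so the adjoint relation reads $\langle\phi, Au\rangle_\varphi = \langle A^*\phi, u\rangle_\varphi$ for all $u$, and $(\partial^k\bar\partial^k + c)_\varphi^*\phi = A^*\phi + \bar c\,\phi$ (the formal adjoint of the scalar $c$ with respect to $\langle\cdot,\cdot\rangle_\varphi$ being $\bar c$, which is the operator recorded before Lemma \ref{A} when $c$ is real). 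First I would expand
$$\vert\vert (A^* + \bar c)\phi\vert\vert_\varphi^2 = \vert\vert A^*\phi\vert\vert_\varphi^2 + \langle A^*\phi, \bar c\phi\rangle_\varphi + \langle \bar c\phi, A^*\phi\rangle_\varphi + |c|^2\,\vert\vert\phi\vert\vert_\varphi^2,$$
and the analogous expansion of $\vert\vert (A+c)\phi\vert\vert_\varphi^2$, so that the two $|c|^2\vert\vert\phi\vert\vert_\varphi^2$ terms cancel when forming the difference.

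The second step is to see that the cross terms carrying $c$ also cancel. From the adjoint relation with $u=\phi$ and the symmetry $\langle g,f\rangle_\varphi = \overline{\langle f,g\rangle_\varphi}$ one extracts $\langle A^*\phi,\phi\rangle_\varphi = \langle\phi, A\phi\rangle_\varphi$ and $\langle\phi, A^*\phi\rangle_\varphi = \langle A\phi,\phi\rangle_\varphi$. Pulling the scalars out (linearity in the second slot, conjugate-linearity in the first), the cross terms of the left-hand side become $\bar c\langle\phi, A\phi\rangle_\varphi + c\langle A\phi,\phi\rangle_\varphi$, while those of $\vert\vert (A+c)\phi\vert\vert_\varphi^2$ become $c\langle A\phi,\phi\rangle_\varphi + \bar c\langle\phi, A\phi\rangle_\varphi$; these coincide, so their difference is zero.

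The heart of the argument is the commutator identity
$$\vert\vert A^*\phi\vert\vert_\varphi^2 - \vert\vert A\phi\vert\vert_\varphi^2 = \langle\phi,\, (AA^* - A^*A)\phi\rangle_\varphi.$$
For the first term I apply the adjoint relation with $u = A^*\phi$, giving $\langle A^*\phi, A^*\phi\rangle_\varphi = \langle\phi, A(A^*\phi)\rangle_\varphi$. For the second term I apply it with $\phi$ replaced by $A\phi$ and $u=\phi$, giving $\langle A\phi, A\phi\rangle_\varphi = \langle A^*(A\phi),\phi\rangle_\varphi$; because $\vert\vert A\phi\vert\vert_\varphi^2$ is real this equals its conjugate $\langle\phi, A^*(A\phi)\rangle_\varphi$. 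Subtracting produces exactly $\langle\phi,\, \partial^k\bar\partial^k(\bar\partial^{k*}\partial^{k*}\phi) - \bar\partial^{k*}\partial^{k*}(\partial^k\bar\partial^k\phi)\rangle_\varphi$, and restoring the surviving term $\vert\vert(A+c)\phi\vert\vert_\varphi^2$ gives the stated equality.

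I expect the main obstacle to be the bookkeeping with the complex constant $c$ and the conjugate-linearity of $\langle\cdot,\cdot\rangle_\varphi$ in its first argument: one must pair $\langle A^*\phi,\bar c\phi\rangle_\varphi$ with $\langle c\phi, A\phi\rangle_\varphi$ correctly so that the scalars recombine, which is precisely where the adjoint $\bar c$ of the scalar part matters (for real $c$ there is nothing to check). Every remaining step is a mechanical use of the adjoint relation together with the reality of the norms $\vert\vert A\phi\vert\vert_\varphi^2$ and $\vert\vert\phi\vert\vert_\varphi^2$.
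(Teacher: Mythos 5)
Your proof is correct and takes essentially the same route as the paper: both rest on the adjoint relation $\langle\phi,\partial^k\bar\partial^k u\rangle_\varphi=\langle\bar\partial_\varphi^{k*}\partial_\varphi^{k*}\phi,u\rangle_\varphi$ and reduce the difference of the two squared norms to the commutator $\partial^k\bar\partial^k(\bar\partial_\varphi^{k*}\partial_\varphi^{k*}\phi)-\bar\partial_\varphi^{k*}\partial_\varphi^{k*}(\partial^k\bar\partial^k\phi)$, the only organizational difference being that you expand and cancel the $c$-cross-terms explicitly while the paper inserts $H_\varphi^*H\phi-H_\varphi^*H\phi$ and cancels the constant terms inside the commutator. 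Your use of $\bar c$ for the adjoint of the scalar part is in fact more careful than the paper, which sets $(\partial^k\bar\partial^k+c)_\varphi^*=\bar\partial_\varphi^{k*}\partial_\varphi^{k*}+c$ and hence implicitly assumes $c$ real; with that convention the cross terms cancel only when $c=\bar c$, so your bookkeeping closes a small gap.
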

\begin{proof}
Let $H = \partial^k \bar{\partial}^{k} + c$ and $H_\varphi^* = (\partial^k \bar{\partial}^{k} + c)_\varphi^*$.\\
For any $\phi \in C_0^\infty(\mathbb{C})$
\begin{eqnarray*}
\vert \vert H_\varphi^* \phi \vert \vert^2_\varphi & = &  <H_\varphi^* \phi, H_\varphi^* \phi>_\varphi \\
& = &  < \phi, H H_\varphi^* \phi>_\varphi \\ 
& = &  < \phi,  H_\varphi^* H  \phi + H H_\varphi^* \phi - H_\varphi^* H  \phi>_\varphi\\
& = &  <H \phi,  H  \phi> + <\phi, H H_\varphi^* \phi - H_\varphi^* H  \phi>_\varphi \\
& = &   \vert \vert H \phi \vert \vert^2_\varphi + <\phi, H H_\varphi^* \phi - H_\varphi^* H  \phi>_\varphi \\
\end{eqnarray*}
we have\\
\begin{eqnarray*}
H H_\varphi^* \phi & = & (\partial^k \bar{\partial}^{k} + c)(\partial^k \bar{\partial}^{k} + c)_\varphi^* \phi  \\
& = &  (\partial^k \bar{\partial}^{k} + c)( \bar{\partial}_\varphi^{k*} \partial_\varphi^{k*} + c) \phi  \\ 
& = & \partial^k \bar{\partial}^{k}( \bar{\partial}_\varphi^{k*} \partial_\varphi^{k*} \phi) +c \partial^k \bar{\partial}^{k} \phi + c\bar{\partial}_\varphi^{k*} \partial_\varphi^{k*} \phi + c^2 \phi  \\
\end{eqnarray*}
We have the same, 
$$H_\varphi^* H  \phi = \bar{\partial}_\varphi^{k*} \partial_\varphi^{k*}(  \partial^k \bar{\partial}^{k} \phi) +c \partial^k \bar{\partial}^{k} \phi + c\bar{\partial}_\varphi^{k*} \partial_\varphi^{k*} \phi + c^2 \phi$$
therefore
$$H H_\varphi^* \phi - H_\varphi^* H  \phi = \partial^k \bar{\partial}^{k}( \bar{\partial}_\varphi^{k*} \partial_\varphi^{k*} \phi) - \bar{\partial}_\varphi^{k*} \partial_\varphi^{k*}(  \partial^k \bar{\partial}^{k} \phi)$$
so
$$\vert \vert H_\varphi^* \phi \vert \vert^2_\varphi  = \vert \vert H \phi \vert \vert^2_\varphi + <\phi,  \partial^k \bar{\partial}^{k}( \bar{\partial}_\varphi^{k*} \partial_\varphi^{k*} \phi) - \bar{\partial}_\varphi^{k*} \partial_\varphi^{k*}(  \partial^k \bar{\partial}^{k} \phi)>_\varphi$$
therefore
$$\vert \vert (\partial^k \bar{\partial}^{k} + c)_\varphi^* \phi \vert \vert^2_\varphi  = \vert \vert (\partial^k \bar{\partial}^{k} + c) \phi \vert \vert^2_\varphi + <\phi,  \partial^k \bar{\partial}^{k}( \bar{\partial}_\varphi^{k*} \partial_\varphi^{k*} \phi) - \bar{\partial}_\varphi^{k*} \partial_\varphi^{k*}(  \partial^k \bar{\partial}^{k} \phi)>_\varphi.$$
\end{proof}
\begin{lemma} \label{C}
\begin{eqnarray*}
 <\phi,  \partial^k \bar{\partial}^{k}( \bar{\partial}_\varphi^{k*} \partial_\varphi^{k*} \phi) - \bar{\partial}_\varphi^{k*} \partial_\varphi^{k*}(  \partial^k \bar{\partial}^{k} \phi)>_\varphi & = & <\phi, \sum_{i,j=1}^k \sum_{l,m=1}^k C_k^i  C_k^j C_k^l C_k^m \times \\ & & (\partial^{k-m}\bar{\partial}^{k-l}\partial^{k-j}\bar{\partial}^{k-i}\phi) \partial^m \bar{\partial}^l (\mathrm{\textnormal{e}}^{ \varphi} \partial^j \bar{\partial}^i \mathrm{\textnormal{e}}^{- \varphi})>
  \end{eqnarray*}
For any $\phi \in C_0^\infty(\mathbb{C})$.
\end{lemma}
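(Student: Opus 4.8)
The plan is to read the quantity inside the left inner product as the commutator of $\partial^{k}\bar{\partial}^{k}$ with its weighted formal adjoint, namely $\partial^{k}\bar{\partial}^{k}\bigl(\bar{\partial}_\varphi^{k*}\partial_\varphi^{k*}\phi\bigr)-\bar{\partial}_\varphi^{k*}\partial_\varphi^{k*}\bigl(\partial^{k}\bar{\partial}^{k}\phi\bigr)=\bigl[\partial^{k}\bar{\partial}^{k},\,\bar{\partial}_\varphi^{k*}\partial_\varphi^{k*}\bigr]\phi$, and to evaluate it by applying the general Leibniz rule twice. Throughout I will use the closed form $\bar{\partial}_\varphi^{k*}\partial_\varphi^{k*}\phi=e^{\varphi}\partial^{k}\bar{\partial}^{k}(\phi\, e^{-\varphi})$ established above, together with the fact that $\partial$ and $\bar{\partial}$ commute, so that $\partial^{k}\bar{\partial}^{k}$ is a constant-coefficient operator.

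First I would turn $\bar{\partial}_\varphi^{k*}\partial_\varphi^{k*}$ into an explicit variable-coefficient operator. Applying the Leibniz formula $\partial^{k}\bar{\partial}^{k}(fg)=\sum_{i,j=0}^{k}C_k^iC_k^j(\partial^{j}\bar{\partial}^{i}f)(\partial^{k-j}\bar{\partial}^{k-i}g)$ with $f=e^{-\varphi}$, $g=\phi$ and multiplying by $e^{\varphi}$ gives
$$\bar{\partial}_\varphi^{k*}\partial_\varphi^{k*}\phi=\sum_{i,j=0}^{k}C_k^iC_k^j\,b_{ij}\,\partial^{k-j}\bar{\partial}^{k-i}\phi,\qquad b_{ij}:=e^{\varphi}\partial^{j}\bar{\partial}^{i}e^{-\varphi}.$$
Since $b_{00}=1$, the index $(i,j)=(0,0)$ reproduces $\partial^{k}\bar{\partial}^{k}\phi$; hence $\bar{\partial}_\varphi^{k*}\partial_\varphi^{k*}=\partial^{k}\bar{\partial}^{k}+R$, and because $\partial^{k}\bar{\partial}^{k}$ commutes with itself the whole expression collapses to $[\partial^{k}\bar{\partial}^{k},R]\phi$. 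This already disposes of the top-order part.

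Next, $\partial^{k}\bar{\partial}^{k}$ commutes with each constant-coefficient factor $\partial^{k-j}\bar{\partial}^{k-i}$, so only its commutators with the multiplications by $b_{ij}$ survive. A second application of Leibniz, $\partial^{k}\bar{\partial}^{k}(b_{ij}v)=\sum_{l,m=0}^{k}C_k^lC_k^m(\partial^{m}\bar{\partial}^{l}b_{ij})(\partial^{k-m}\bar{\partial}^{k-l}v)$, whose $(l,m)=(0,0)$ term cancels against $R(\partial^{k}\bar{\partial}^{k}\phi)$, yields with $v=\partial^{k-j}\bar{\partial}^{k-i}\phi$ a sum whose summand is precisely $C_k^iC_k^jC_k^lC_k^m\,(\partial^{k-m}\bar{\partial}^{k-l}\partial^{k-j}\bar{\partial}^{k-i}\phi)\,\partial^{m}\bar{\partial}^{l}(e^{\varphi}\partial^{j}\bar{\partial}^{i}e^{-\varphi})$. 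Pairing with $\phi$ in $\langle\cdot,\cdot\rangle_\varphi$ then produces the stated identity.

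The only real difficulty is the combinatorial bookkeeping of these two nested Leibniz expansions over four indices, using repeatedly that $\partial$ and $\bar{\partial}$ commute, together with the verification of exactly which terms survive. The two cancellations above isolate the contributions in which the weight $e^{\mp\varphi}$ is genuinely differentiated, and the delicate point is to confirm that this leaves precisely the summation range indicated in the statement. I would first check the reduction in the case $k=1$, where $R$ has only three terms and the computation of $[\partial\bar{\partial},R]$ is short, and then transcribe the same index manipulation to general $k$.
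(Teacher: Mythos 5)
Your approach is the same as the paper's: expand $\bar{\partial}_\varphi^{k*}\partial_\varphi^{k*}\phi=\sum_{i,j=0}^{k}C_k^iC_k^j\,b_{ij}\,\partial^{k-j}\bar{\partial}^{k-i}\phi$ with $b_{ij}=e^{\varphi}\partial^{j}\bar{\partial}^{i}e^{-\varphi}$ by one Leibniz expansion, apply $\partial^{k}\bar{\partial}^{k}$ by a second one, and observe that $\bar{\partial}_\varphi^{k*}\partial_\varphi^{k*}(\partial^{k}\bar{\partial}^{k}\phi)$ is exactly the $(l,m)=(0,0)$ slice of the resulting quadruple sum. Up to that point your argument is correct. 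The gap is the very step you flag as ``the delicate point'' and defer: after the cancellation the surviving range is all $(i,j)\in\{0,\dots,k\}^2$ with $(l,m)\neq(0,0)$ (the $(i,j)=(0,0)$ terms drop out only because $b_{00}=1$ is constant), and this is \emph{not} the range $i,j,l,m\geq 1$ appearing in the statement. The discarded cross terms, in which exactly one of the indices in a pair vanishes, do not vanish in general, so the reduction to the stated summation range fails. Your own proposed sanity check at $k=1$ exposes this: for $\varphi=|z|^2$ one computes directly that $\partial\bar{\partial}(\bar{\partial}_\varphi^{*}\partial_\varphi^{*}\phi)-\bar{\partial}_\varphi^{*}\partial_\varphi^{*}(\partial\bar{\partial}\phi)=\phi+z\partial\phi+\bar{z}\bar{\partial}\phi-2\partial\bar{\partial}\phi$, coming from the five index quadruples $(0,1,1,0)$, $(1,0,0,1)$, $(1,1,1,0)$, $(1,1,0,1)$, $(1,1,1,1)$, whereas the right-hand side of the lemma retains only $(1,1,1,1)$ and gives $\phi\,\partial\bar{\partial}\bigl(e^{\varphi}\partial\bar{\partial}e^{-\varphi}\bigr)=\phi$. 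So the identity as stated is false, not merely unproved.

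For what it is worth, the paper's own proof commits exactly the same unjustified jump: it writes the quadruple sum over $i,j,l,m=0,\dots,k$, subtracts the $(l,m)=(0,0)$ part, and then silently rewrites the remainder as $\sum_{i,j=1}^{k}\sum_{l,m=1}^{k}$. You have therefore reproduced the paper's argument faithfully, including its flaw; carrying out the $k=1$ verification you yourself propose, rather than postponing it, is precisely what is needed to see that the last step cannot be completed. Any correct version of the lemma must keep the full range $(l,m)\neq(0,0)$, and the downstream positivity estimate would then have to be re-examined.
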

\begin{proof}
\begin{eqnarray*}
 \bar{\partial}_\varphi^{k*} \partial_\varphi^{k*}( \phi) & = & \mathrm{\textnormal{e}}^{ \varphi} \partial^{k} \bar{\partial}^k ( \phi \mathrm{\textnormal{e}}^{- \varphi})\\
 & = & \mathrm{\textnormal{e}}^{ \varphi} \partial^k (\sum_{i=0}^k C_k^i \bar{\partial}^{k-i}\phi \bar{\partial}^i \mathrm{\textnormal{e}}^{- \varphi})\\ 
 & = & \mathrm{\textnormal{e}}^{ \varphi} \sum_{i=0}^k C_k^i \partial^k (\bar{\partial}^{k-i}\phi \bar{\partial}^i \mathrm{\textnormal{e}}^{- \varphi})\\
 & = & \mathrm{\textnormal{e}}^{ \varphi} \sum_{i=0}^k C_k^i \sum_{j=0}^k C_k^j \partial^{k-j}\bar{\partial}^{k-i}\phi \partial^j \bar{\partial}^i \mathrm{\textnormal{e}}^{- \varphi}   
 \\
 & = &  \sum_{i,j=0}^k C_k^i  C_k^j \partial^{k-j} \bar{\partial}^{k-i}\phi (\mathrm{\textnormal{e}}^{ \varphi} \partial^j \bar{\partial}^i \mathrm{\textnormal{e}}^{- \varphi})  
 \\
 \end{eqnarray*}
 \begin{eqnarray*}
  \partial^k \bar{\partial}^{k}( \bar{\partial}_\varphi^{k*} \partial_\varphi^{k*} \phi) & = &  \sum_{i,j=0}^k C_k^i  C_k^j \partial^k \bar{\partial}^{k}[ \partial^{k-j} \bar{\partial}^{k-i}\phi (\mathrm{\textnormal{e}}^{ \varphi} \partial^j \bar{\partial}^i \mathrm{\textnormal{e}}^{- \varphi}) ]\\
  & = & \sum_{i,j=0}^k C_k^i  C_k^j \partial^k \sum_{l=0}^k C_k^l(\bar{\partial}^{k-l}\partial^{k-j} \bar{\partial}^{k-i}\phi) \bar{\partial}^l (\mathrm{\textnormal{e}}^{ \varphi} \partial^j \bar{\partial}^i \mathrm{\textnormal{e}}^{- \varphi})\\
  & = & \sum_{i,j,l=0}^k C_k^i  C_k^j C_k^l \partial^k [(\bar{\partial}^{k-l} \partial^{k-j}\bar{\partial}^{k-i}\phi) \bar{\partial}^l (\mathrm{\textnormal{e}}^{ \varphi} \partial^j \bar{\partial}^i \mathrm{\textnormal{e}}^{- \varphi})]\\
  & = & \sum_{i,j,l=0}^k C_k^i  C_k^j C_k^l \sum_{m=0}^k C_k^m  (\partial^{k-m}\bar{\partial}^{k-l}\partial^{k-j}\bar{\partial}^{k-i}\phi) \partial^m \bar{\partial}^l (\mathrm{\textnormal{e}}^{ \varphi} \partial^j \bar{\partial}^i \mathrm{\textnormal{e}}^{- \varphi})\\
  & = & \sum_{i,j,l,m=0}^k C_k^i  C_k^j C_k^l C_k^m  (\partial^{k-m}\bar{\partial}^{k-l}\partial^{k-j}\bar{\partial}^{k-i}\phi) \partial^m \bar{\partial}^l (\mathrm{\textnormal{e}}^{ \varphi} \partial^j \bar{\partial}^i \mathrm{\textnormal{e}}^{- \varphi}).\\
  \end{eqnarray*}
  We have the same
  $$\bar{\partial}_\varphi^{k*} \partial_\varphi^{k*}(\partial^k \bar{\partial}^{k} \phi)= \sum_{i,j=0}^k C_k^i  C_k^j  (\partial^{k-j}\bar{\partial}^{k-i}\partial^{k}\bar{\partial}^{k}\phi) (\mathrm{\textnormal{e}}^{ \varphi} \partial^j \bar{\partial}^i \mathrm{\textnormal{e}}^{- \varphi}).$$
  then \\
  \begin{eqnarray*}
  \partial^k \bar{\partial}^{k}( \bar{\partial}_\varphi^{k*} \partial_\varphi^{k*} \phi) - \bar{\partial}_\varphi^{k*} \partial_\varphi^{k*}(  \partial^k \bar{\partial}^{k} \phi) & = &  \sum_{i,j=1}^k \sum_{l,m=1}^k C_k^i  C_k^j C_k^l C_k^m \times \\ & & (\partial^{k-m}\bar{\partial}^{k-l}\partial^{k-j} \bar{\partial}^{k-i}\phi) \partial^m \bar{\partial}^l (\mathrm{\textnormal{e}}^{ \varphi} \partial^j \bar{\partial}^i \mathrm{\textnormal{e}}^{- \varphi})
  \end{eqnarray*}
 therefore
  \begin{eqnarray*}
 <\phi, \partial^k \bar{\partial}^{k}( \bar{\partial}_\varphi^{k*} \partial_\varphi^{k*} \phi) - \bar{\partial}_\varphi^{k*} \partial_\varphi^{k*}(  \partial^k \bar{\partial}^{k} \phi)> & = & <\phi, \sum_{i,j=1}^k \sum_{l,m=1}^k C_k^i  C_k^j C_k^l C_k^m \times \\ & & (\partial^{k-m}\bar{\partial}^{k-l}\partial^{k-j}\bar{\partial}^{k-i}\phi) \partial^m \bar{\partial}^l (\mathrm{\textnormal{e}}^{ \varphi} \partial^j \bar{\partial}^i \mathrm{\textnormal{e}}^{- \varphi})>.
  \end{eqnarray*}
  \end{proof}
  \begin{lemma} \label{B}
  Let $\varphi = \vert z \vert ^2$ then
\begin{eqnarray*}
 <\phi,  \partial^k \bar{\partial}^{k}( \bar{\partial}_\varphi^{k*} \partial_\varphi^{k*} \phi) - \bar{\partial}_\varphi^{k*} \partial_\varphi^{k*}(  \partial^k \bar{\partial}^{k} \phi)>_\varphi & = & <\phi, \sum_{j=1}^k \sum_{l=1}^k \sum_{i=j-n}^k  \sum_{n=l}^j\sum_{m=1}^{i-j+n} C_k^i  C_k^j C_k^l C_k^m C_j^n\times \\ & & (\partial^{k-m}\bar{\partial}^{k-l}\partial^{k-j}\bar{\partial}^{k-i}\phi)\times \\ & & (-1)^{i+n}  \frac{i!}{(i-j+n-m)!} \\ & &  \frac{(n)!}{(n-l)!} (z)^{i-j+n-m} (\bar{z})^{n-l}>
 \end{eqnarray*}
For all $\phi \in C_0^\infty(\mathbb{C})$.
\end{lemma}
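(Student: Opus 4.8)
The plan is to start from the general identity of Lemma \ref{C} and to evaluate the weight-dependent factor $\partial^m \bar{\partial}^l (e^{\varphi} \partial^j \bar{\partial}^i e^{-\varphi})$ in the special case $\varphi = |z|^2 = z\bar{z}$, then to substitute the result back into the inner product. Since $\bar{\partial}(z\bar{z}) = z$ and $z$ is annihilated by $\bar{\partial}$, an immediate induction gives $\bar{\partial}^i e^{-z\bar{z}} = (-1)^i z^i e^{-z\bar{z}}$. The only genuine computational input is therefore the action of $\partial^j$ on $z^i e^{-z\bar{z}}$, which I would expand by the Leibniz rule; this is precisely the step that produces the new summation index $n$ and the binomial coefficient $C_j^n$ occurring in the statement.

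First I would write, applying Leibniz to $\partial^j = \partial_z^j$ on the product $z^i \cdot e^{-z\bar{z}}$,
$$\partial^j\bar{\partial}^i e^{-\varphi} = (-1)^i \sum_{n=0}^j C_j^n\,(\partial^{j-n}z^i)(\partial^n e^{-z\bar{z}}) = (-1)^i\sum_{n=0}^j C_j^n \frac{i!}{(i-j+n)!}\,(-1)^n z^{i-j+n}\bar{z}^n e^{-z\bar{z}},$$
using $\partial^{j-n}z^i = \frac{i!}{(i-j+n)!}z^{i-j+n}$ and $\partial^n e^{-z\bar{z}} = (-1)^n\bar{z}^n e^{-z\bar{z}}$. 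Multiplying by $e^{\varphi}=e^{z\bar{z}}$ cancels the exponential and leaves the polynomial $e^{\varphi}\partial^j\bar{\partial}^i e^{-\varphi} = (-1)^i\sum_n C_j^n(-1)^n \frac{i!}{(i-j+n)!} z^{i-j+n}\bar{z}^n$. I would then apply $\partial^m\bar{\partial}^l$ monomial by monomial via $\partial^m z^{i-j+n} = \frac{(i-j+n)!}{(i-j+n-m)!}z^{i-j+n-m}$ and $\bar{\partial}^l\bar{z}^n = \frac{n!}{(n-l)!}\bar{z}^{n-l}$. The factor $(i-j+n)!$ cancels against the denominator already present and, collecting the signs $(-1)^i(-1)^n = (-1)^{i+n}$, one obtains exactly
$$\partial^m\bar{\partial}^l(e^{\varphi}\partial^j\bar{\partial}^i e^{-\varphi}) = \sum_n C_j^n (-1)^{i+n}\frac{i!}{(i-j+n-m)!}\frac{n!}{(n-l)!}\,z^{i-j+n-m}\bar{z}^{n-l}.$$
Inserting this factor into the formula of Lemma \ref{C} and pairing against $\phi$ yields the claimed identity, once the ranges of summation are settled.

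Settling those ranges is the step requiring the most care, and is where I expect the only real obstacle. The quintuple sum in the statement is not a free sum: each limit encodes a nonvanishing condition for one of the differentiations above. The bound $n\le j$ comes from the Leibniz expansion; $n\ge l$ from $\bar{\partial}^l\bar{z}^n$ being nonzero; $m\le i-j+n$ (hence the upper limit $i-j+n$) from $\partial^m z^{i-j+n}$ being nonzero; and $i\ge j-n$ from the monomial $z^{i-j+n}$ existing. Together with the ranges $1\le j\le k$, $1\le l\le k$, $m\ge 1$ inherited from Lemma \ref{C}, these reproduce the summation $\sum_{j=1}^k\sum_{l=1}^k\sum_{n=l}^j\sum_{i=j-n}^k\sum_{m=1}^{i-j+n}$. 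I would finally double-check that the indices discarded in passing from $\{0,\dots,k\}$ to $\{1,\dots,k\}$ in Lemma \ref{C} are exactly those carrying a zero index, so that no surviving term is dropped, after which the stated equality follows by a direct substitution into $\langle\phi,\,\cdot\,\rangle_\varphi$.
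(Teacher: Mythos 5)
Your proposal is correct and follows essentially the same route as the paper: compute $\bar{\partial}^i e^{-|z|^2}=(-1)^i z^i e^{-|z|^2}$ (the paper invokes Fa\`a di Bruno's formula here, which collapses to the same single term you get by direct induction since $\bar\partial^\gamma(z\bar z)=0$ for $\gamma\ge 2$), expand $\partial^j$ by Leibniz to produce the index $n$ and the coefficient $C_j^n$, differentiate the resulting monomials by $\partial^m\bar\partial^l$ while recording the nonvanishing conditions $n\ge l$, $i\ge j-n$, $m\le i-j+n$ that fix the summation ranges, and substitute into Lemma \ref{C}. No substantive difference from the paper's argument.
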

\begin{proof}
 Let $h(g) = \mathrm{\textnormal{e}}^{g}$ with $g = - \varphi$, according to the formula of Faà di Bruno \cite{3}, we have
 \begin{eqnarray*}
\bar{\partial}^i \mathrm{\textnormal{e}}^{g} & = & \bar{\partial}^i(h(g))\\
 & = &  \sum \frac{i!}{m_1!m_2! \cdots m_i!}h^{(m_1 + \cdots + m_i)}(g)\prod_{\gamma=1}^i (\frac{\bar{\partial}^\gamma g}{\gamma !})^{m_\gamma}\\ 
 & = &  (\sum \frac{i!}{m_1!m_2! \cdots m_i!} \prod_{\gamma=1}^i (\frac{-\bar{\partial}^\gamma \varphi}{\gamma !})^{m_\gamma})\mathrm{\textnormal{e}}^{- \varphi}\\
 & =: & P_i \mathrm{\textnormal{e}}^{- \varphi}\\
 \end{eqnarray*}
 Where the sum runs through all the $i$-tuples of nonnegative integers $(m_1, m_2, \cdots , m_i)$ satisfying the constraint : $1m_1 + 2m_2+ \cdots + im_i = i$.\\
 Let $\varphi = \vert z \vert ^2$, we have
 \[ 
 \bar{\partial}^\gamma \varphi = \left \{
 \begin{array}{rl}
 z & \mbox{ if } \gamma = 1\\
 0 & \mbox{ if } \gamma \geq 2
 \end{array}
 \right.
 \]
 $$P_i = (-\bar{\partial}^\gamma \varphi)^i = (-z)^i = (-1)^i (z)^i$$
 $$ \bar{\partial}^i \mathrm{\textnormal{e}}^{-\vert z \vert ^2} =  (-1)^i (z)^i \mathrm{\textnormal{e}}^{-\vert z \vert ^2}$$
 So
\begin{equation} \label{D}
  \partial^j \bar{\partial}^i \mathrm{\textnormal{e}}^{-\vert z \vert ^2} = (-1)^i \partial^j(z)^i \mathrm{\textnormal{e}}^{-\vert z \vert ^2})
  \end{equation}
  \begin{equation} \label{D}
  \partial^j \bar{\partial}^i \mathrm{\textnormal{e}}^{-\vert z \vert ^2} = (-1)^i \sum_{n=0}^j  C_j^n  \partial^{j-n}(z)^i)\partial^n \mathrm{\textnormal{e}}^{-\vert z \vert ^2}
  \end{equation}
By still applying Faà di Bruno's formula, we have
 $$P_j  = (-1)^j (z)^j$$
 $$ \partial^n \mathrm{\textnormal{e}}^{-\vert z \vert ^2} =  (-1)^n (\bar{z})^n \mathrm{\textnormal{e}}^{-\vert z \vert ^2}$$
 By replacing in \ref{D}, we have
 \begin{equation} \label{E}
 \partial^j \bar{\partial}^i \mathrm{\textnormal{e}}^{-\vert z \vert ^2} = (-1)^i \sum_{n=0}^j (-1)^n C_j^n  \partial^{j-n}(z)^i (\bar{z})^n \mathrm{\textnormal{e}}^{-\vert z \vert ^2}
 \end{equation}
 however
 \[ 
 \partial^{j-n} (z)^i = \left \{
 \begin{array}{lr}
 0 & \mbox{ if } i < j-n \\ & \\
 \frac{i!}{(i-j+n)!} (z)^{i-j+n} & \mbox{ if } i \geq j-n
 \end{array}
 \right.
 \]
 By replacing in \ref{E}, we have
 \[ 
 \partial^j \bar{\partial}^i \mathrm{\textnormal{e}}^{-\vert z \vert ^2} = \left \{
 \begin{array}{lr}
 0 & \mbox{ if } i < j-n \\ & \\
 \sum_{n=0}^j (-1)^{n+i} C_j^n  \frac{i!}{(i-j+n)!}  (z)^{i-j+n} (\bar{z})^n \mathrm{\textnormal{e}}^{-\vert z \vert ^2} & \mbox{ if } i \geq j-n
 \end{array}
 \right.
 \]
 \;\\
 
  \[ 
 \mathrm{\textnormal{e}}^{\vert z \vert ^2}\partial^j \bar{\partial}^i \mathrm{\textnormal{e}}^{-\vert z \vert ^2} = \left \{
 \begin{array}{lr}
 0 & \mbox{ if } i < j-n \\ & \\
 \sum_{n=0}^j (-1)^{n+i} C_j^n  \frac{i!}{(i-j+n)!}  (z)^{i-j+n} (\bar{z})^n  & \mbox{ if } i \geq j-n
 \end{array}
 \right.
 \]
we have
$$\bar{\partial}^l(\mathrm{\textnormal{e}}^{\vert z \vert ^2} \partial^j \bar{\partial}^i \mathrm{\textnormal{e}}^{-\vert z \vert ^2}) =  \sum_{n=0}^j (-1)^{n+i} C_j^n  \frac{i!}{(i-j+n)!}  (z)^{i-j+n} \bar{\partial}^l(\bar{z})^n   \mbox{ if } i \geq j-n $$
\;\\
\[ 
\bar{\partial}^l( \mathrm{\textnormal{e}}^{\vert z \vert ^2} \partial^j \bar{\partial}^i \mathrm{\textnormal{e}}^{-\vert z \vert ^2}) = \left \{
 \begin{array}{lr}
 0  & \\ \mbox{ if } n < l \mbox{ and } i < j-n & \\& \\& \\
 \sum_{n=0}^j (-1)^{n+i} C_j^n  \frac{i!}{(i-j+n)!} \frac{(n)!}{(n-l)!} (z)^{i-j+n} (\bar{z})^{n-l}  & \\ & \\ \mbox{ if } i \geq j-n \mbox{ and } n \geq l
 \end{array}
 \right.
 \]
 so
\begin{eqnarray*} 
\partial^m \bar{\partial}^l( \mathrm{\textnormal{e}}^{\vert z \vert ^2} \partial^j \bar{\partial}^i \mathrm{\textnormal{e}}^{-\vert z \vert ^2}) = \left \{
 \begin{array}{lr}
 0  & \\ \mbox{ if } n < l, \; i-j+n < m \mbox{ and } i < j-n & \\& \\& \\
 \sum_{n=0}^j (-1)^{n+i} C_j^n  \frac{i!}{(i-j+n)!} \frac{(n)!}{(n-l)!} \frac{(i-j+n)!}{(i-j+n-m)!}   (z)^{i-j+n-m} (\bar{z})^{n-l}  & \\ & \\ \mbox{ if } i \geq j-n, \;  i-j+n \geq m \mbox{ and } n \geq l &
 \end{array}
 \right.
 \end{eqnarray*}
 %\begin{eqnarray*}
\begin{align*}
<\phi, \partial^k \bar{\partial}^{k}( \bar{\partial}_\varphi^{k*} \partial_\varphi^{k*} \phi) - \bar{\partial}_\varphi^{k*} \partial_\varphi^{k*}(  \partial^k \bar{\partial}^{k} \phi)>  =<\phi, &\\ \sum_{j=1}^k \sum_{l=1}^k \sum_{i=j-n}^k  \sum_{n=l}^j\sum_{m=1}^{i-j+n} C_k^i  C_k^j C_k^l C_k^m C_j^n  (\partial^{k-m}\bar{\partial}^{k-l}\partial^{k-j}\bar{\partial}^{k-i}\phi) (-1)^{i+n} \times &\\ \frac{i!}{(i-j+n-m)!} \frac{(n)!}{(n-l)!} (z)^{i-j+n-m} (\bar{z})^{n-l}>.&\\
\end{align*}
 %\end{eqnarray*}
 \end{proof}
 \begin{lemma}
  Let $\varphi = \vert z \vert ^2$ then
\begin{eqnarray*}
<\phi, \partial^k \bar{\partial}^{k}( \bar{\partial}_\varphi^{k*} \partial_\varphi^{k*} \phi) - \bar{\partial}_\varphi^{k*} \partial_\varphi^{k*}(  \partial^k \bar{\partial}^{k} \phi)>_\varphi & = & \sum_{\alpha =0}^{k-1} \sum_{\beta =k-i+j-n}^{k-1}\frac{(k!)^4}{(\alpha!)^2(\beta!)^2((k-\beta)!)((k-\alpha)!)} \times\\ & &  \vert \vert \bar{\partial}^{\beta} \partial^{\alpha} \phi \vert \vert_\varphi^2
\end{eqnarray*}

 \end{lemma}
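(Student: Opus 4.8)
The plan is to start from the explicit formula established in the previous lemma, which already rewrites the commutator pairing $\langle\phi,\partial^k\bar\partial^k(\bar\partial_\varphi^{k*}\partial_\varphi^{k*}\phi)-\bar\partial_\varphi^{k*}\partial_\varphi^{k*}(\partial^k\bar\partial^k\phi)\rangle_\varphi$ as a finite sum, indexed by $(i,j,l,m,n)$, of terms of the form $c_{ijlmn}\,\langle\phi,(\partial^{k-m}\bar\partial^{k-l}\partial^{k-j}\bar\partial^{k-i}\phi)\,z^{i-j+n-m}\bar z^{n-l}\rangle_\varphi$, where $c_{ijlmn}$ collects the product of binomials $C_k^iC_k^jC_k^lC_k^m C_j^n$, the two factorial ratios, and the sign $(-1)^{i+n}$. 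The goal is to show that this quantity collapses to a nonnegative diagonal combination of the squared norms $\|\bar\partial^\beta\partial^\alpha\phi\|_\varphi^2$. The whole argument is an integration-by-parts computation in the weighted space $L^2(\mathbb{C},\mathrm{e}^{-|z|^2})$, followed by a binomial resummation.

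First I would record the two weighted adjoint identities on $C_0^\infty(\mathbb{C})$: for $\varphi=|z|^2$ one has, after integrating by parts and using that $\phi$ has compact support (so all boundary terms vanish), $\langle f,\partial g\rangle_\varphi=\langle(z-\bar\partial)f,g\rangle_\varphi$ and $\langle f,\bar\partial g\rangle_\varphi=\langle(\bar z-\partial)f,g\rangle_\varphi$. Equivalently, multiplication by $z$ (resp. $\bar z$) against the Gaussian weight can be traded for a $\bar\partial$ (resp. $\partial$) falling on the weight, since $\bar\partial^a\mathrm{e}^{-|z|^2}=(-1)^a z^a\mathrm{e}^{-|z|^2}$ and $\partial^b\mathrm{e}^{-|z|^2}=(-1)^b\bar z^b\mathrm{e}^{-|z|^2}$. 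Applying these repeatedly to each summand, I would move exactly the right number of derivatives off the factor $(\partial^{k-m}\bar\partial^{k-l}\partial^{k-j}\bar\partial^{k-i}\phi)$ and onto $\bar\phi$, while absorbing the monomial $z^{i-j+n-m}\bar z^{n-l}$ into the differentiated weight. After this transfer each summand becomes, up to a constant, a genuine inner product $\langle\bar\partial^\beta\partial^\alpha\phi,\bar\partial^\beta\partial^\alpha\phi\rangle_\varphi=\|\bar\partial^\beta\partial^\alpha\phi\|_\varphi^2$ for the appropriate $\alpha,\beta$ read off from the indices, with $0\le\alpha,\beta\le k-1$ once the top-order terms (already excised in Lemma \ref{C}, whose sums begin at $1$) are discarded.

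The next step is the combinatorial bookkeeping: reindex the quintuple sum by the pair $(\alpha,\beta)$ and sum out the remaining free indices. Here the product $C_k^iC_k^jC_k^lC_k^mC_j^n$ together with the factorial ratios $\frac{i!}{(i-j+n-m)!}\frac{n!}{(n-l)!}$ and the alternating sign $(-1)^{i+n}$ should telescope through Vandermonde-type identities, leaving the single clean coefficient $\frac{(k!)^4}{(\alpha!)^2(\beta!)^2(k-\beta)!(k-\alpha)!}$ in front of each $\|\bar\partial^\beta\partial^\alpha\phi\|_\varphi^2$.

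The step I expect to be the main obstacle is precisely this cancellation and resummation: one must verify that all the off-diagonal cross terms $\langle\bar\partial^{\beta'}\partial^{\alpha'}\phi,\bar\partial^{\beta}\partial^{\alpha}\phi\rangle_\varphi$ with $(\alpha',\beta')\neq(\alpha,\beta)$ cancel, so that only perfect squares survive and the right-hand side is manifestly real and nonnegative, and that the alternating binomial sums indeed produce the stated factorial coefficient. The vanishing of the cross terms is what forces the signs $(-1)^{i+n}$ to be tracked exactly, and is the reason the leading-order contributions disappear, consistent with the lower summation bound $1$ in Lemma \ref{C}. Pinning down the index range of $\beta$ (the lower limit appearing in the statement) is part of the same bookkeeping.
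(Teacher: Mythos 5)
Your overall strategy --- integration by parts against the Gaussian weight followed by a binomial resummation --- is the same one the paper uses, but you stop exactly where the proof actually has to happen. You explicitly defer the central step (``the cancellation and resummation \dots\ is the main obstacle'', the coefficients ``should telescope through Vandermonde-type identities'') and never carry it out; as written, the proposal establishes neither that the off-diagonal terms $\langle\bar\partial^{\beta'}\partial^{\alpha'}\phi,\bar\partial^{\beta}\partial^{\alpha}\phi\rangle_\varphi$ cancel nor that the surviving coefficient is $\frac{(k!)^4}{(\alpha!)^2(\beta!)^2(k-\beta)!(k-\alpha)!}$. That is a genuine gap, and the order of operations you chose makes it harder than it needs to be: if you integrate by parts term by term in the quintuple sum first, you really do generate a large family of cross terms whose cancellation would then require a separate alternating-sum argument that you have not supplied.

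The paper closes this gap by resumming \emph{before} integrating by parts, so that no cross terms ever arise. Concretely: the inner sum over $n$, carrying the monomials $z^{i-j+n-m}\bar z^{\,n-l}$, the factorial ratios and the sign $(-1)^{i+n}$, is recognized as the expansion of $\mathrm{e}^{|z|^2}\partial^{\,j-l}\bar\partial^{\,i-m}\mathrm{e}^{-|z|^2}$ up to the sign $(-1)^{m+l}$; then, after the substitutions $s=j-l$ and $t=i-m$, the sums over $i$ and $j$ are recognized as the Leibniz expansion of $\partial^{k-m}\bar\partial^{k-l}\bigl(\partial^{k-l}\bar\partial^{k-m}\phi\cdot\mathrm{e}^{-|z|^2}\bigr)$. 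The commutator thus collapses to $\sum_{l,m}A'(k,l,m)\,(-1)^{m+l}\,\partial^{k-m}\bar\partial^{k-l}\bigl(\partial^{k-l}\bar\partial^{k-m}\phi\cdot\mathrm{e}^{-|z|^2}\bigr)$ with $A'=\frac{(k!)^4}{m!\,l!\,((k-l)!)^2((k-m)!)^2}$, which is already in divergence form; pairing with $\bar\phi$ and integrating by parts $(k-m)+(k-l)$ times then yields $\sum_{l,m}A'\,\|\partial^{k-l}\bar\partial^{k-m}\phi\|_\varphi^2$ directly, with only perfect squares appearing, and the change of variables $\alpha=k-l$, $\beta=k-m$ gives exactly the stated coefficient with no Vandermonde identity needed. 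To make your route work you must either prove the cross-term cancellation in full or, more simply, reorder the argument as above.
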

 \begin{proof}
 According to the lemma \ref{B}, we have
 \begin{align*}
<\phi, \partial^k \bar{\partial}^{k}( \bar{\partial}_\varphi^{k*} \partial_\varphi^{k*} \phi) - \bar{\partial}_\varphi^{k*} \partial_\varphi^{k*}(  \partial^k \bar{\partial}^{k} \phi)>  =<\phi, &\\ \sum_{j=1}^k \sum_{l=1}^k \sum_{i=j-n}^k  \sum_{n=l}^j\sum_{m=1}^{i-j+n} C_k^i  C_k^j C_k^l C_k^m C_j^n  (\partial^{k-m}\bar{\partial}^{k-l}\partial^{k-j}\bar{\partial}^{k-i}\phi) (-1)^{i+n} \times &\\ \frac{i!}{(i-j+n-m)!} \frac{(n)!}{(n-l)!} (z)^{i-j+n-m} (\bar{z})^{n-l}>&\\
\end{align*}
Let $s = n-l$
\begin{align*}
<\phi, \partial^k \bar{\partial}^{k}( \bar{\partial}_\varphi^{k*} \partial_\varphi^{k*} \phi) - \bar{\partial}_\varphi^{k*} \partial_\varphi^{k*}(  \partial^k \bar{\partial}^{k} \phi)>  =<\phi, &\\ \sum_{j=1}^k \sum_{l=1}^k \sum_{i=j-n}^k  \sum_{m=1}^{i-j+n} A (\partial^{k-m}\bar{\partial}^{k-l}\partial^{k-j}\bar{\partial}^{k-i}\phi) \sum_{s=0}^{j-l}    (-1)^{i+l-m+m+s}C_{j-l}^s \times &\\ \frac{(i-m)!}{(i-j+l+s-m)!} (z)^{i-j-m+l+s} (\bar{z})^{s}>&\\
\end{align*}
with
 $$A=A(k,j,l,m,i) := \frac{(k!)^4}{m!l!(k-l)!(k-m)! (k-j)!(k-i)!(j-l)!(i-m)!}$$
\begin{align*}
<\phi, \partial^k \bar{\partial}^{k}( \bar{\partial}_\varphi^{k*} \partial_\varphi^{k*} \phi) - \bar{\partial}_\varphi^{k*} \partial_\varphi^{k*}(  \partial^k \bar{\partial}^{k} \phi)>  =<\phi, &\\ \sum_{j=1}^k \sum_{l=1}^k \sum_{i=j-n}^k  \sum_{m=1}^{i-j+n} A (\partial^{k-m}\bar{\partial}^{k-l}\partial^{k-j}\bar{\partial}^{k-i}\phi)(-1)^{m+l}  \mathrm{\textnormal{e}}^{\vert z \vert ^2}\partial^{j-l} \bar{\partial}^{i-m} \mathrm{\textnormal{e}}^{-\vert z \vert ^2}>&\\
\end{align*}

\begin{align*}
[ \partial^k \bar{\partial}^{k}( \bar{\partial}_\varphi^{k*} \partial_\varphi^{k*} \phi) - \bar{\partial}_\varphi^{k*} \partial_\varphi^{k*}(  \partial^k \bar{\partial}^{k} \phi)] \mathrm{\textnormal{e}}^{-\vert z \vert ^2} = &\\ \sum_{j=1}^k \sum_{l=1}^k \sum_{i=j-n}^k  \sum_{m=1}^{i-j+n} A (\partial^{k-m}\bar{\partial}^{k-l}\partial^{k-j}\bar{\partial}^{k-i}\phi)(-1)^{m+l} \partial^{j-l} \bar{\partial}^{i-m} \mathrm{\textnormal{e}}^{-\vert z \vert ^2}&\\
\end{align*} 
\begin{remark}
Since $i \geq j-n, \;  i-j+n \geq m \mbox{ and } n \geq l$ then $j \geq l$ and $i \geq m$. If $j < l$ or $i < m$ then $$ \partial^k \bar{\partial}^{k}( \bar{\partial}_\varphi^{k*} \partial_\varphi^{k*} \phi) - \bar{\partial}_\varphi^{k*} \partial_\varphi^{k*}(  \partial^k \bar{\partial}^{k} \phi)] \mathrm{\textnormal{e}}^{-\vert z \vert ^2} = 0.$$
\end{remark}

 Let $s = j-l$ and $t = i-m$ then
 \begin{align*}
[ \partial^k \bar{\partial}^{k}( \bar{\partial}_\varphi^{k*} \partial_\varphi^{k*} \phi) - \bar{\partial}_\varphi^{k*} \partial_\varphi^{k*}(  \partial^k \bar{\partial}^{k} \phi)] \mathrm{\textnormal{e}}^{-\vert z \vert ^2} = &\\ \sum_{l=1}^k \sum_{m=1}^{i-j+n} A' \sum_{s=0}^{k-l} \sum_{t=0}^{k-m}C_{k-m}^s  C_{k-l}^t (\partial^{k-m}\bar{\partial}^{k-l}\partial^{k-l-s}\bar{\partial}^{k-m-t}\phi)(-1)^{m+l} \partial^{s} \bar{\partial}^{t} \mathrm{\textnormal{e}}^{-\vert z \vert ^2}&\\
\end{align*} 
with
 $$A'=A'(k,l,m) := \frac{(k!)^4}{m!l![(k-l)!]^2[(k-m)!]^2}$$
\begin{align*}
[ \partial^k \bar{\partial}^{k}( \bar{\partial}_\varphi^{k*} \partial_\varphi^{k*} \phi) - \bar{\partial}_\varphi^{k*} \partial_\varphi^{k*}(  \partial^k \bar{\partial}^{k} \phi)] \mathrm{\textnormal{e}}^{-\vert z \vert ^2} = &\\ \sum_{l=1}^k \sum_{m=1}^{i-j+n} A'(-1)^{m+l} \partial^{k-m} \bar{\partial}^{k-l}(\partial^{k-l}\bar{\partial}^{k-m}\phi \mathrm{\textnormal{e}}^{-\vert z \vert ^2})&\\
\end{align*} 

  By returning to the computation with respect of weighted inner product we have,
  \begin{eqnarray*}
  <\phi, \partial^k \bar{\partial}^{k}( \bar{\partial}_\varphi^{k*} \partial_\varphi^{k*} \phi) - \bar{\partial}_\varphi^{k*} \partial_\varphi^{k*}(  \partial^k \bar{\partial}^{k} \phi)>_\varphi & = & \int_\mathbb{C} \overline{\phi}\partial^k \bar{\partial}^{k}( \bar{\partial}_\varphi^{k*} \partial_\varphi^{k*} \phi) \mathrm{\textnormal{e}}^{-\vert z \vert ^2} d\sigma - \\ & & \int_\mathbb{C} \overline{\phi}\bar{\partial}_\varphi^{k*} \partial_\varphi^{k*}(  \partial^k \bar{\partial}^{k} \phi)\mathrm{\textnormal{e}}^{-\vert z \vert ^2} d\sigma\\
  & = & \sum_{m=1}^k \sum_{l=1}^{i-j+n} A'(-1)^{m+l} \times\\ & & \int_\mathbb{C} \overline{\phi} \partial^{k-m}\bar{\partial}^{k-l}[(\partial^{k-l}\bar{\partial}^{k-m}\phi)  \mathrm{\textnormal{e}}^{-\vert z \vert ^2}] d\sigma\\
  & = & \sum_{m=1}^k \sum_{l=1}^{i-j+n} A'(-1)^{m+l} \times \\ & &(-1)^{-m-l} \int_\mathbb{C} \bar{\partial}^{k-l} \partial^{k-m}\overline{\phi} (\partial^{k-l}\bar{\partial}^{k-m}\phi)  \mathrm{\textnormal{e}}^{-\vert z \vert ^2} d\sigma\\
  & = & \sum_{m=1}^k \sum_{l=1}^{i-j+n} A' \times\\ & & \int_\mathbb{C} \overline{\partial^{k-l}\bar{\partial}^{k-m} \phi} (\partial^{k-l}\bar{\partial}^{k-m}\phi)  \mathrm{\textnormal{e}}^{-\vert z \vert ^2} d\sigma\\
  & = & \sum_{m=1}^k \sum_{l=1}^{i-j+n} A' \times\\ & & <\partial^{k-l}\bar{\partial}^{k-m} \phi, \partial^{k-l} \bar{\partial}^{k-m} \phi>_\varphi\\
  & = & \sum_{m=1}^k \sum_{l=1}^{i-j+n} A' \vert \vert \partial^{k-l} \bar{\partial}^{k-m} \phi \vert \vert_\varphi^2\\
  & = & \sum_{m=1}^k \sum_{l=1}^{i-j+n} \frac{ (k!)^4}{m!l!((k-l)!)^2((k-m)!)^2}\times\\ & &\vert \vert \partial^{k-l} \bar{\partial}^{k-m} \phi \vert \vert_\varphi^2\\
  \end{eqnarray*}
  Let $\alpha = k-l$ and $\beta = k-m$
$$<\phi, \partial^k \bar{\partial}^{k}( \bar{\partial}_\varphi^{k*} \partial_\varphi^{k*} \phi) - \bar{\partial}_\varphi^{k*} \partial_\varphi^{k*}(  \partial^k \bar{\partial}^{k} \phi)>_\varphi = \sum_{\alpha=0}^{k-1} \sum_{\beta=0}^{k-1} \frac{(k!)^4}{(\alpha!)^2(\beta!)^2((k-\beta)!)((k-\alpha)!)}\vert \vert \partial^{\alpha} \bar{\partial}^{\beta} \phi \vert \vert_\varphi^2.$$
\end{proof}
\section{Proof of the main Theorem}
\begin{proof}{(Theorem \ref{P})}
Let $\varphi = \vert z \vert^2$ and $\phi \in C_0^\infty(\mathbb{C})$. According to the lemma \ref{B}
$$\vert \vert (\partial^k \bar{\partial}^{k} + c)_\varphi^* \phi \vert \vert_\varphi^2 \geq <\phi,\partial^k \bar{\partial}^{k}( \bar{\partial}^{k*} \partial^{k*} \phi) - \bar{\partial}^{k*} \partial^{k*}(\partial^k \bar{\partial}^{k} \phi) >_\varphi.$$
According to the lemma \ref{C}
\begin{eqnarray*}
 <\phi,\partial^k \bar{\partial}^{k}( \bar{\partial}^{k*} \partial^{k*} \phi) - \bar{\partial}^{k*} \partial^{k*}(\partial^k \bar{\partial}^{k} \phi) >_\varphi & = & \sum_{\alpha=0}^{k-1} \sum_{\beta=0}^{k-j} \frac{(k!)^4}{(\alpha!)^2(\beta!)^2((k-\beta)!)((k-\alpha)!)}  \\& & \vert \vert \partial^{\alpha} \bar{\partial}^{\beta} \phi \vert \vert_\varphi^2\\
 \end{eqnarray*}
 therefore
 $$\vert \vert (\partial^k \bar{\partial}^{k} + c)_\varphi^* \phi \vert \vert_\varphi^2 \geq (k!)^2 \vert \vert \phi \vert \vert_\varphi^2.$$
 According to the Cauchy-Schwarz inegality, we have
 \begin{eqnarray*}
\vert<f, \phi>_\varphi \vert^2 & \leq & \; \vert \vert f \vert \vert_\varphi^2 \vert \vert \phi \vert \vert_\varphi^2\\
& = &(\frac{1}{(k!)^2} \vert \vert f \vert \vert_\varphi^2) ((k!)^2 \vert \vert \phi \vert \vert_\varphi^2)\\
& \leq & (\frac{1}{(k!)^2} \vert \vert f \vert \vert_\varphi^2) \vert \vert (\partial^k \bar{\partial}^{k} + c)_\varphi^* \phi \vert \vert_\varphi^2.\\ 
 \end{eqnarray*}
 According to the lemma \ref{A}, there exists a $u \in  L^2(\mathbb{\C}, \mathrm{\textnormal{e}}^{- \vert z \vert^2})$ such that
 $$ \partial^k \bar{\partial}^{k} u + cu = f$$ with the norm estimate  
 $$\vert \vert u \vert \vert_\varphi^2 \leq \frac{1}{(k!)^2} \vert \vert f \vert \vert_\varphi^2$$
 that is to say
 $$\int_\mathbb{C} \vert u \vert^2 \mathrm{\textnormal{e}}^{- \vert z \vert^2} d\sigma \leq \frac{1}{(k!)^2} \int_\mathbb{C} \vert f \vert^2 \mathrm{\textnormal{e}}^{- \vert z \vert^2} d\sigma.$$
 Moreover for $k=1$, for all $\phi \in C_0^\infty(\mathbb{C})$, we have
$$\partial^1 \bar{\partial}^1( \bar{\partial}_\varphi^{1*} \partial_\varphi^{1*} \phi) - \bar{\partial}_\varphi^{1*} \partial_\varphi^{1*}(  \partial^1 \bar{\partial}^1 \phi) = \phi \partial^1 \bar{\partial}^1(\mathrm{\textnormal{e}}^{\varphi} \partial^1 \bar{\partial}^1 \mathrm{\textnormal{e}}^{-  \varphi }).$$
According to the lemma \ref{B}, we have
\begin{eqnarray*}
\vert \vert (\partial^1 \bar{\partial}^1 + c)_\varphi^* \phi \vert \vert_\varphi^2 & \geq & <\phi, \partial^1 \bar{\partial}^1( \bar{\partial}_\varphi^{1*} \partial_\varphi^{1*} \phi) - \bar{\partial}_\varphi^{1*} \partial_\varphi^{1*}(  \partial^1 \bar{\partial}^1 \phi) >_\varphi\\
& = & <\phi, \phi \partial^1 \bar{\partial}^1(\mathrm{\textnormal{e}}^{\varphi} \partial^1 \bar{\partial}^1 \mathrm{\textnormal{e}}^{-  \varphi })>_\varphi\\
& = & <\phi\sqrt{ \partial^1 \bar{\partial}^1(\mathrm{\textnormal{e}}^{\varphi} \partial^1 \bar{\partial}^1 \mathrm{\textnormal{e}}^{-  \varphi })}, \phi \sqrt{ \partial^1 \bar{\partial}^1(\mathrm{\textnormal{e}}^{\varphi} \partial^1 \bar{\partial}^1 \mathrm{\textnormal{e}}^{-  \varphi })}>_\varphi\\
& = &  \Big\Vert \phi\sqrt{\partial^1 \bar{\partial}^1(\mathrm{\textnormal{e}}^{ \varphi} \partial^1 \bar{\partial}^1 \mathrm{\textnormal{e}}^{-  \varphi })}  \Big\Vert_\varphi^2 \\
\end{eqnarray*}
By applying the Cauchy-Schwarz inequality, we have
\begin{eqnarray*}
\vert <f, \phi >_\varphi \vert ^2 & = & \Big\vert <\frac{f}{\sqrt{\partial^1 \bar{\partial}^1(\mathrm{\textnormal{e}}^{ \varphi } \partial^1 \bar{\partial}^1 \mathrm{\textnormal{e}}^{-  \varphi })}}, \phi \sqrt{\partial^1 \bar{\partial}^1(\mathrm{\textnormal{e}}^{ \varphi } \partial^1 \bar{\partial}^1 \mathrm{\textnormal{e}}^{- \varphi })}>_\varphi \Big\vert^2\\
& \leq &  \Big\Vert \frac{f}{\sqrt{\partial^1 \bar{\partial}^1(\mathrm{\textnormal{e}}^{ \varphi } \partial^1 \bar{\partial}^1 \mathrm{\textnormal{e}}^{-  \varphi })}} \Big\Vert_\varphi^2  \Big\Vert \phi \sqrt{\partial^1 \bar{\partial}^1(\mathrm{\textnormal{e}}^{ \varphi} \partial^1 \bar{\partial}^1 \mathrm{\textnormal{e}}^{- \varphi })}  \Big\Vert_\varphi^2\\
& \leq & \Big\Vert \frac{f}{\sqrt{\partial^1 \bar{\partial}^1(\mathrm{\textnormal{e}}^{\varphi} \partial^1 \bar{\partial}^1 \mathrm{\textnormal{e}}^{- \varphi})}} \Big\Vert_\varphi^2  \Big\Vert (\partial^1 \bar{\partial}^1+ c)_\varphi^* \phi  \Big\Vert_\varphi^2\\
\end{eqnarray*}
According to the lemma \ref{A}, there exists a weak solution $u \in L^2(\mathbb{\C}, \mathrm{\textnormal{e}}^{-\varphi})$ of the equation
$$ \partial^1 \bar{\partial}^1 u + cu = f$$ with
$$\vert \vert u \vert \vert_\varphi^2 \leq   \Big\Vert \frac{f}{\sqrt{\partial^1 \bar{\partial}^1(\mathrm{\textnormal{e}}^{ \varphi} \partial^1 \bar{\partial}^1 \mathrm{\textnormal{e}}^{-  \varphi })}} \Big\Vert_\varphi^2$$
that is to say
$$\int_\mathbb{C} \vert u \vert^2 \mathrm{\textnormal{e}}^{-  \varphi } d\sigma \leq 16 \int_\mathbb{C} \frac{\vert f \vert^2}{\Delta(\mathrm{\textnormal{e}}^{ \varphi} \Delta \mathrm{\textnormal{e}}^{-  \varphi })}  \mathrm{\textnormal{e}}^{-  \varphi } d\sigma.$$
\end{proof}
\begin{theorem}
There is a liner operator and bounded $$T_k : L^2(\mathbb{\C}, \mathrm{\textnormal{e}}^{- \vert z \vert^2}) \longrightarrow L^2(\mathbb{\C}, \mathrm{\textnormal{e}}^{- \vert z \vert^2})$$
such that  $$ (\partial^k \bar{\partial}^{k}  + c) T_k = I$$ with the norm estimate   
$$\vert \vert T_k \vert \vert_\varphi \leq \frac{1}{(k!)^2}$$ where $\vert \vert T_k \vert \vert_\varphi$ is the norm of $T_k$ in $L^2(\mathbb{\C}, \mathrm{\textnormal{e}}^{- \vert z \vert^2})$.
\end{theorem}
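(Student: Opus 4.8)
The plan is to promote the purely existential conclusion of Theorem \ref{P} into a genuine \emph{linear} right inverse by making the choice of solution canonical. Throughout fix $\varphi = |z|^2$, write $H = \partial^k \bar{\partial}^{k} + c$ and $H_\varphi^* = (\partial^k \bar{\partial}^{k} + c)_\varphi^*$, and recall the a priori estimate established inside the proof of Theorem \ref{P}, namely
\[
|\langle f,\phi\rangle_\varphi|^2 \;\le\; \frac{1}{(k!)^2}\,\|f\|_\varphi^2\,\|H_\varphi^*\phi\|_\varphi^2,\qquad \phi\in C_0^\infty(\mathbb{C}).
\]
First I would revisit Lemma \ref{A}: the solution $u_0$ it delivers comes from an \emph{arbitrary} Hahn--Banach extension of the functional $L_f$, and it is precisely this arbitrariness that obstructs linearity of $f\mapsto u_0$. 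The remedy is to replace the Hahn--Banach step by a canonical extension.

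Concretely, set $E=\{H_\varphi^*\phi : \phi\in C_0^\infty(\mathbb{C})\}\subset L^2(\mathbb{C},\mathrm{e}^{-\varphi})$ and let $\overline{E}$ be its closure. The displayed estimate shows, first, that $L_f(H_\varphi^*\phi):=\langle f,\phi\rangle_\varphi$ is well defined on $E$ (if $H_\varphi^*\phi_1=H_\varphi^*\phi_2$ then $\langle f,\phi_1-\phi_2\rangle_\varphi=0$) and bounded with $\|L_f\|\le \tfrac{1}{k!}\|f\|_\varphi$. Hence $L_f$ extends by continuity to a bounded functional on $\overline{E}$ of the same norm, and I then extend it to all of $L^2(\mathbb{C},\mathrm{e}^{-\varphi})$ by declaring it to vanish on $\overline{E}^{\perp}$. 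Writing $\widetilde{L}_f$ for this canonical extension, we still have $\|\widetilde{L}_f\|\le \tfrac{1}{k!}\|f\|_\varphi$, and by the Riesz representation theorem there is a unique $T_kf\in L^2(\mathbb{C},\mathrm{e}^{-\varphi})$ with $\widetilde{L}_f(g)=\langle T_kf,g\rangle_\varphi$ for all $g$, with $\|T_kf\|_\varphi=\|\widetilde{L}_f\|$.

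Next I would verify the two defining properties. For the right-inverse identity the computation of Lemma \ref{A} applies verbatim: for every $\phi\in C_0^\infty(\mathbb{C})$,
\[
\langle H T_kf,\phi\rangle_\varphi=\langle T_kf,H_\varphi^*\phi\rangle_\varphi=\widetilde{L}_f(H_\varphi^*\phi)=L_f(H_\varphi^*\phi)=\langle f,\phi\rangle_\varphi,
\]
so $H T_kf=f$ weakly, i.e. $(\partial^k \bar{\partial}^{k}+c)T_k=I$. For the norm bound, the estimate $\|T_kf\|_\varphi^2\le \tfrac{1}{(k!)^2}\|f\|_\varphi^2$ is exactly the conclusion of Theorem \ref{P}, which gives the bound on $\|T_k\|_\varphi$ claimed in the statement.

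The step I expect to be the main obstacle is \emph{linearity}, since Theorem \ref{P} by itself yields only existence. This is where the canonical extension pays off: the assignment $f\mapsto L_f$ is visibly linear on $E$, continuous extension to $\overline{E}$ preserves linearity, extension by zero on $\overline{E}^{\perp}$ is linear, and the Riesz correspondence $\widetilde{L}_f\mapsto T_kf$ is linear in a fixed way, so that $T_k(\lambda f+g)=\lambda\,T_kf+T_kg$. Equivalently, and more transparently, one notes that $N:=\ker H$ coincides with $E^{\perp}$ (since $Hu=0$ weakly means $\langle u,H_\varphi^*\phi\rangle_\varphi=0$ for all $\phi$), whence $T_kf$ is the unique solution of $Hu=f$ lying in $\overline{E}=N^{\perp}$, i.e. the minimal-norm solution; minimal-norm solutions of a linear equation depend linearly on the right-hand side, which dispatches the linearity obstacle cleanly.
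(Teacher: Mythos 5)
Your proof is correct and is genuinely more careful than the paper's. The paper's own argument simply invokes Theorem \ref{P} to produce, for each $f$, \emph{some} weak solution $u$, sets $T_k f := u$, and asserts the conclusion; it never addresses why this assignment is linear, which is exactly the obstacle you single out. Your remedy --- replacing the arbitrary Hahn--Banach extension in Lemma \ref{A} by the canonical extension that vanishes on $\overline{E}^{\perp}$, equivalently selecting the unique solution lying in $\overline{E} = (\ker H)^{\perp}$, i.e.\ the minimal-norm solution --- is the standard and correct way to turn the existence statement into a bounded \emph{linear} right inverse. Your verification that $H T_k f = f$ weakly goes through verbatim from Lemma \ref{A}, and the observation that the two conjugate-linearities (of $f \mapsto L_f$ and of the Riesz correspondence) cancel so that $T_k$ is honestly linear is sound. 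What your route buys is precisely the linearity the paper takes for granted; it costs nothing extra, since the a priori estimate you quote is already established inside the proof of Theorem \ref{P}.

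One caveat on the constant: the estimate $\| T_k f \|_\varphi^2 \le \frac{1}{(k!)^2} \| f \|_\varphi^2$ yields $\| T_k \|_\varphi \le \frac{1}{k!}$, not $\frac{1}{(k!)^2}$ as the statement (and your closing remark on the norm bound) asserts. For $k \ge 2$ the bound $\frac{1}{(k!)^2}$ is strictly stronger and does not follow from Theorem \ref{P}. This slip originates in the paper itself, whose proof writes $\| u \|_\varphi \le \frac{1}{(k!)^2}\| f \|_\varphi$ and $\| T_k(f) \|_\varphi \le \frac{1}{k!}\| f \|_\varphi$ in consecutive lines before concluding $\| T_k \|_\varphi \le \frac{1}{(k!)^2}$; but your write-up should state the bound you actually prove, namely $\| T_k \|_\varphi \le \frac{1}{k!}$.
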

\begin{proof}
Let $f \in L^2(\mathbb{\C}, \mathrm{\textnormal{e}}^{- \vert z \vert^2})$. According to the theorem \ref{P}, there is a weak solution $u \in L^2(\mathbb{\C}, \mathrm{\textnormal{e}}^{- \vert z \vert^2})$ such that
$$ \partial^k \bar{\partial}^{k} u + cu = f$$ with the norm estimate  
 $$\vert \vert u \vert \vert_\varphi \leq \frac{1}{(k!)^2} \vert \vert f \vert \vert_\varphi$$
therefore 
$$ (\partial^k \bar{\partial}^{k}  + c)T_k(f) = f$$ with
 $$\vert \vert T_k(f) \vert \vert_\varphi \leq \frac{1}{(k!)} \vert \vert f \vert \vert_\varphi.$$
So
 $$ (\partial^k \bar{\partial}^{k}  + c)T_k = I$$ with 
 $$\vert \vert T_k \vert \vert_\varphi \leq \frac{1}{(k!)^2}.$$
\end{proof}

\section{Some consequences}
Let $\lambda > 0$ and $z_0 \in \mathbb{C}$. Let $\varphi = \lambda^2 \vert z - z_0 \vert^2$, we obtain the corollary of the following Theorem \ref{P} :
\begin{coro} \label{2}
Let $f \in L^2(\mathbb{\C}, \mathrm{\textnormal{e}}^{-\lambda^2 \vert z - z_0 \vert^2})$, there exists a weak solution $u \in L^2(\mathbb{\C}, \mathrm{\textnormal{e}}^{-\lambda^2 \vert z - z_0 \vert^2})$ of the equation
$$ \partial^k \bar{\partial}^{k} u + cu = f$$ with the norm estimate  
$$\int_\mathbb{C} \vert u \vert^2 \mathrm{\textnormal{e}}^{- \lambda^2 \vert z - z_0 \vert^2} d\sigma \leq \frac{1}{(\lambda^k k!)^2} \int_\mathbb{C} \vert f \vert^2 \mathrm{\textnormal{e}}^{- \lambda^2 \vert z - z_0 \vert^2} d\sigma.$$
\end{coro}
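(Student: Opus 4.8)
The plan is to deduce Corollary~\ref{2} from Theorem~\ref{P} by the affine substitution $w = \lambda(z - z_0)$, which carries the Gaussian weight $e^{-\lambda^2 |z-z_0|^2}$ onto the standard weight $e^{-|w|^2}$ of Theorem~\ref{P}. Writing $v(w) = u(z_0 + w/\lambda)$ and $g(w) = f(z_0 + w/\lambda)$, the chain rule gives $\partial_z = \lambda\,\partial_w$ and $\bar{\partial}_z = \lambda\,\bar{\partial}_w$, so the operator scales as $\partial_z^k \bar{\partial}_z^k = \lambda^{2k}\,\partial_w^k \bar{\partial}_w^k$. Consequently $\partial_z^k \bar{\partial}_z^k u + cu = f$ is equivalent to $\lambda^{2k}\,\partial_w^k \bar{\partial}_w^k v + c\,v = g$, which after division by $\lambda^{2k}$ takes the normalized shape $\partial_w^k \bar{\partial}_w^k v + c'\,v = G$ with $c' = c/\lambda^{2k}$ and $G = g/\lambda^{2k}$.

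The first thing I would record is that Theorem~\ref{P} is uniform in the constant: the a priori bound $\|(\partial^k\bar{\partial}^k + c)^*_\varphi \phi\|_\varphi^2 \ge (k!)^2 \|\phi\|_\varphi^2$ on which its proof rests arises from the $\alpha=\beta=0$ term of the final Lemma, and the $c$-dependent pieces cancel in the commutator $HH^*_\varphi - H^*_\varphi H$. Hence Theorem~\ref{P} applies verbatim with $c'$ in place of $c$ and furnishes a weak solution $v \in L^2(\mathbb{C}, e^{-|w|^2})$ of $\partial_w^k\bar{\partial}_w^k v + c'v = G$ obeying $\int_{\mathbb{C}} |v|^2 e^{-|w|^2}\,d\sigma \le \frac{1}{(k!)^2}\int_{\mathbb{C}} |G|^2 e^{-|w|^2}\,d\sigma$.

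It remains to transport this back to the $z$-variable. Under $w=\lambda(z-z_0)$ one has $d\sigma_w = \lambda^2\,d\sigma_z$ and $|w|^2 = \lambda^2|z-z_0|^2$, so $\int|v|^2 e^{-|w|^2}d\sigma_w = \lambda^2 \int|u|^2 e^{-\lambda^2|z-z_0|^2}d\sigma_z$ and likewise $\int|g|^2 e^{-|w|^2}d\sigma_w = \lambda^2\int|f|^2 e^{-\lambda^2|z-z_0|^2}d\sigma_z$; since $G=g/\lambda^{2k}$ this gives $\int|G|^2 e^{-|w|^2}d\sigma_w = \lambda^{2-4k}\int|f|^2 e^{-\lambda^2|z-z_0|^2}d\sigma_z$. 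Inserting these into the bound and cancelling the common factor $\lambda^2$ produces the estimate for $u$. I would also verify that weak solutions pull back correctly: testing the identity for $v$ against $\psi(w)$ and setting $\phi(z)=\psi(\lambda(z-z_0))$ recovers the defining identity for $u$, because the $\lambda^{2k}$ from the operator and the $\lambda^2$ from $d\sigma$ distribute consistently between the two sides.

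The genuinely routine input is the functional analysis, inherited wholesale through Lemma~\ref{A} and Theorem~\ref{P}. The one step that requires care, and which I expect to be the main source of error, is the exact accounting of the powers of $\lambda$: the operator contributes $\lambda^{2k}$ (each of $\partial^k$ and $\bar{\partial}^k$ giving $\lambda^k$), the normalization $G=g/\lambda^{2k}$ contributes another $\lambda^{2k}$, and the Jacobian $\lambda^2$ enters identically in all three integrals. Carried out as above, the scaling yields the constant $\frac{1}{\lambda^{4k}(k!)^2}=\frac{1}{(\lambda^{2k}k!)^2}$; reconciling this with the constant $\frac{1}{(\lambda^k k!)^2}$ in the statement is exactly the bookkeeping one must pin down, and as an independent cross-check I would rerun the proof of Theorem~\ref{P} directly with $\varphi=\lambda^2|z-z_0|^2$, where the Faà di Bruno evaluation of $\partial^j\bar{\partial}^i e^{-\lambda^2|z-z_0|^2}$ now carries the extra powers of $\lambda^2$ that fix the final lower bound.
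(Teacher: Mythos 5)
Your approach is the same as the paper's: reduce to Theorem \ref{P} by the substitution $w=\lambda(z-z_0)$. Your execution of the scaling, however, is the correct one, and the discrepancy you flag at the end is not a slip in your bookkeeping but a genuine error in the statement and in the paper's own proof. Since $\partial^k\bar{\partial}^k$ has order $2k$, it scales as $\lambda^{2k}$ under this substitution, exactly as you say; with $v(w)=u(z_0+w/\lambda)$ solving $\partial_w^k\bar{\partial}_w^k v+(c/\lambda^{2k})v=g/\lambda^{2k}$ and the Jacobian $\lambda^2$ cancelling between the two sides, one lands on the constant $\frac{1}{\lambda^{4k}(k!)^2}=\frac{1}{(\lambda^{2k}k!)^2}$. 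Your cross-check also comes out the same way: running the $L^2$ machinery directly with $\varphi=\lambda^2|z-z_0|^2$, the $\alpha=\beta=0$ term of the commutator picks up a factor $\lambda^{2}$ for each of the $2k$ contractions with $\partial\bar{\partial}\varphi=\lambda^2$, giving $\|(\partial^k\bar{\partial}^k+c)^*_\varphi\phi\|_\varphi^2\ge \lambda^{4k}(k!)^2\|\phi\|_\varphi^2$ and hence again $\frac{1}{(\lambda^{2k}k!)^2}$. Your observation that Theorem \ref{P} is uniform in the complex constant $c$ (the $c$-terms cancel in the commutator) is the right justification for invoking it with $c/\lambda^{2k}$.

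By contrast, the paper's proof sets $v$ to solve $\partial^k\bar{\partial}^k v+(c/\lambda^{k})v=g$ and puts $u(z)=\lambda^{-k}v(\lambda(z-z_0))$; a direct computation gives $\partial_z^k\bar{\partial}_z^k u=\lambda^{k}(\partial_w^k\bar{\partial}_w^k v)(w)=\lambda^k g-cv=\lambda^k f-c\lambda^k u$, so this $u$ satisfies $\lambda^{-k}\partial^k\bar{\partial}^k u+cu=f$ rather than the target equation. The power of $\lambda$ in the corollary appears to have been imported from the order-$k$ operator $\bar{\partial}^k+a$ of \cite{2}, where the operator scales as $\lambda^k$; it is wrong for the order-$2k$ operator here. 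Concretely: for $\lambda\ge 1$ your bound $\frac{1}{(\lambda^{2k}k!)^2}$ is stronger than the stated $\frac{1}{(\lambda^{k}k!)^2}$, so the corollary follows from your argument; for $0<\lambda<1$ the stated bound is strictly stronger than what either your argument or the paper's establishes, and neither proof yields it. So the one thing you should do is stop trying to ``reconcile'' your constant with the printed one and instead record $\frac{1}{(\lambda^{2k}k!)^2}$ as the constant the method actually delivers. (Note also that this error propagates to Corollary \ref{3}, which uses the case $\lambda=1$ and is therefore unaffected numerically, but the general-$\lambda$ statement should be corrected.)
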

\begin{proof}
Let $f \in L^2(\mathbb{\C}, \mathrm{\textnormal{e}}^{-\lambda^2 \vert z - z_0 \vert^2})$, we have $$\int_\mathbb{C} \vert f \vert^2 \mathrm{\textnormal{e}}^{- \lambda^2 \vert z - z_0 \vert^2} d\sigma < + \infty.$$
Lat $z = \frac{w}{\lambda} + z_0$ and $g(w)= f(z)= f(\frac{w}{\lambda} + z_0)$ then
$$\frac{1}{\lambda^2} \int_\mathbb{C} \vert g \vert^2 \mathrm{\textnormal{e}}^{- \vert w \vert^2} d\sigma(w) < + \infty$$
therefore $g \in L^2(\mathbb{\C}, \mathrm{\textnormal{e}}^{-\vert w \vert^2})$. According to the Theorem \ref{P}, there exists a weak solution $v \in L^2(\mathbb{\C}, \mathrm{\textnormal{e}}^{-\vert w \vert^2})$ of the equation
$$ \partial^k \bar{\partial}^{k} v + \frac{c}{\lambda^k} v = g$$ with the norm estimate  
$$\int_\mathbb{C} \vert v \vert^2 \mathrm{\textnormal{e}}^{- \vert w \vert^2} d\sigma(w) \leq \frac{1}{( k!)^2} \int_\mathbb{C} \vert g \vert^2 \mathrm{\textnormal{e}}^{- \vert w \vert^2} d\sigma(w).$$
Let $u(z) = \frac{1}{\lambda^k} v(w) = \frac{1}{\lambda^k} v(\lambda (z - z_0))$.\\
Then
$$ \partial^k \bar{\partial}^{k} u + cu = f$$ with the norm estimate  
$$\int_\mathbb{C} \vert u \vert^2 \mathrm{\textnormal{e}}^{- \lambda^2 \vert z - z_0 \vert^2} d\sigma \leq \frac{1}{(\lambda^k k!)^2} \int_\mathbb{C} \vert f \vert^2 \mathrm{\textnormal{e}}^{- \lambda^2 \vert z - z_0 \vert^2} d\sigma.$$
\end{proof}\\
%%%%%%%%%%%%%%%%%%%%%%%%%%%%%%%%%%%%%%%%%%%%%%%%%%%%%
As a consequence of the Corollary \ref{2} we have
\begin{coro} \label{3}
Let $U \in \mathbb{C}$ be a bounded open set.\\
Let $f \in L^2(U)$, there exists a weak solution $u \in L^2(U)$ of the equation
$$ \partial^k \bar{\partial}^{k} u + cu = f$$ with 
$$\vert \vert u \vert \vert_{L^2(U)} \leq \Big(\frac{{\textnormal{e}}^{\vert U \vert^2}}{(k!)^2}\Big) \vert \vert f \vert \vert_{L^2(U)}$$ 
where $\vert U \vert $ is the diameter of $U$.
\end{coro}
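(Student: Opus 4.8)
The plan is to reduce the statement to Corollary \ref{2} by extending $f$ by zero and then comparing the Gaussian weight with the constant weight on the bounded set $U$. First I would fix a point $z_0 \in U$ and take $\varphi = |z-z_0|^2$, i.e. the case $\lambda = 1$ of Corollary \ref{2}. Let $\tilde f$ denote the extension of $f$ by zero to all of $\C$. Since $e^{-|z-z_0|^2} \leq 1$, one has
$$\int_\C |\tilde f|^2 e^{-|z-z_0|^2}\,d\sigma = \int_U |f|^2 e^{-|z-z_0|^2}\,d\sigma \leq \int_U |f|^2\,d\sigma = \|f\|_{L^2(U)}^2 < +\infty,$$
so $\tilde f \in L^2(\C, e^{-|z-z_0|^2})$ and Corollary \ref{2} applies to $\tilde f$.

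Corollary \ref{2} then furnishes a weak solution $u \in L^2(\C, e^{-|z-z_0|^2})$ of $\partial^k \bar{\partial}^{k} u + cu = \tilde f$ with
$$\int_\C |u|^2 e^{-|z-z_0|^2}\,d\sigma \leq \frac{1}{(k!)^2}\int_\C |\tilde f|^2 e^{-|z-z_0|^2}\,d\sigma \leq \frac{1}{(k!)^2}\|f\|_{L^2(U)}^2.$$
I would then restrict $u$ to $U$. This restriction is again a weak solution of the equation on $U$: any $\phi \in C_0^\infty(U)$ extends by zero to an element of $C_0^\infty(\C)$, and testing the global weak identity against such $\phi$ reproduces exactly the weak formulation on $U$, since $\tilde f = f$ on $U$.

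The quantitative heart of the argument is the reverse weight comparison provided by the diameter: for every $z \in U$ one has $|z - z_0| \leq |U|$, hence $e^{-|z-z_0|^2} \geq e^{-|U|^2}$ on $U$. Combining this with the previous estimate gives
$$e^{-|U|^2}\int_U |u|^2\,d\sigma \leq \int_U |u|^2 e^{-|z-z_0|^2}\,d\sigma \leq \int_\C |u|^2 e^{-|z-z_0|^2}\,d\sigma \leq \frac{1}{(k!)^2}\|f\|_{L^2(U)}^2,$$
which rearranges to $\|u\|_{L^2(U)}^2 \leq \frac{e^{|U|^2}}{(k!)^2}\|f\|_{L^2(U)}^2$, the asserted bound. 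I do not expect a genuine obstacle here: the content is entirely the change of weight on a bounded domain, and the only points needing care are verifying that restriction preserves the weak-solution property and that $u|_U$ indeed lies in $L^2(U)$, which follows from the same lower bound $e^{-|z-z_0|^2} \geq e^{-|U|^2}$.
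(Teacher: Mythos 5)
Your argument is correct and follows essentially the same route as the paper: extend $f$ by zero, apply Corollary \ref{2} with $\lambda=1$ and $z_0\in U$, and convert between the Gaussian and flat weights on $U$ via $e^{-|U|^2}\le e^{-|z-z_0|^2}\le 1$; your added remarks on why the restriction remains a weak solution are a welcome precision the paper omits. The only caveat (shared with the paper's own proof) is that what you actually obtain is $\Vert u\Vert_{L^2(U)}^2\le \frac{e^{|U|^2}}{(k!)^2}\Vert f\Vert_{L^2(U)}^2$, which yields $\Vert u\Vert_{L^2(U)}\le \frac{e^{|U|^2/2}}{k!}\Vert f\Vert_{L^2(U)}$ rather than literally the unsquared constant displayed in the corollary.
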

\begin{proof}
Let $z_0 \in U$ and $f \in L^2(U)$, we have
 \[ 
 \tilde{f}(z) = \left \{
 \begin{array}{rl}
 f(z) & \mbox{ si } z \in U \\
 0 & \mbox{ si } z \in \mathbb{C} \setminus U
 \end{array}
 \right.
 \]
 Therefore $\tilde{f} \in L^2(\mathbb{C}) \subset L^2(\mathbb{\C}, \mathrm{\textnormal{e}}^{-\vert z - z_0 \vert^2})$. According to the Corollary \ref{2}, there exists a weak solution $\tilde{u} \in  L^2(\mathbb{\C}, \mathrm{\textnormal{e}}^{-\vert z - z_0 \vert^2})$ of the equation $$ \partial^k \bar{\partial}^{k} \tilde{u} + c\tilde{u} = \tilde{f}$$ with
 $$\int_\mathbb{C} \vert \tilde{u} \vert^2 \mathrm{\textnormal{e}}^{-  \vert z - z_0 \vert^2} d\sigma \leq \frac{1}{(k!)^2} \int_\mathbb{C} \vert \tilde{f} \vert^2 \mathrm{\textnormal{e}}^{-  \vert z - z_0 \vert^2} d\sigma$$
 therefore
 $$\int_\mathbb{C} \vert \tilde{u} \vert^2 \mathrm{\textnormal{e}}^{-  \vert z - z_0 \vert^2} d\sigma \leq \frac{1}{(k!)^2} \int_U \vert f \vert^2  d\sigma$$
 however $$\int_\mathbb{C} \vert \tilde{u} \vert^2 \mathrm{\textnormal{e}}^{-  \vert z - z_0 \vert^2} d\sigma \geq \mathrm{\textnormal{e}}^{-\vert U \vert^2} \int_U \vert \tilde{u} \vert^2  d\sigma$$
 therefore
 $$\int_U \vert \tilde{u} \vert^2  d\sigma \leq \Big(\frac{{\textnormal{e}}^{\vert U \vert^2}}{(k!)^2}\Big) \int_U \vert f \vert^2  d\sigma.$$
 Let $\tilde{u}_{\mid U} = u$ then we have
 $$ \partial^k \bar{\partial}^{k} u + cu = f$$  with
$$\vert \vert u \vert \vert_{L^2(U)} \leq \Big(\frac{{\textnormal{e}}^{\vert U \vert^2}}{(k!)^2}\Big) \vert \vert f \vert \vert_{L^2(U)}.$$ 
\end{proof}
%\begin{coro}
%Soit $f \in L^2(\mathbb{C})$ ou $f \in L^\infty(\mathbb{C})$, il existe une solution faible $u \in L_{loc}^2(\mathbb{C})$ de l'équation
%$$ \partial^k \bar{\partial}^{k} u + cu = f$$
%\end{coro}
%\begin{proof}
%Soit $f \in L^2(\mathbb{C})$ ou $f \in L^\infty(\mathbb{C})$.\\
%Or $L^2(\mathbb{C}) \subset  L^2(\mathbb{\C}, \mathrm{\textnormal{e}}^{-\vert z  \vert^2})$,
%\; \; \; \;  $L^\infty(\mathbb{C}) \subset  L^2(\mathbb{\C}, \mathrm{\textnormal{e}}^{-\vert z  \vert^2})$\\ et $$L^2(\mathbb{\C}, \mathrm{\textnormal{e}}^{-\vert z  \vert^2}) \subset L_{loc}^2(\mathbb{C})$$
%donc d'après le Théorème \ref{P}, il existe une solution faible $u \in L_{loc}^2(\mathbb{C})$ de l'équation
 \section*{Acknowledgments}
 The authors would like to thank the director of  Ziguinchor Polytechnic Institute (ZIP) and all the members of the SAGELAB laboratory at ZIP where we stayed to do  this work.

\end{document}